\renewcommand\footnotetextcopyrightpermission[1]{} 
\newcommand{\T}[1]{\boldsymbol{\mathscr{#1}}}
\newcommand{\bit}{\begin{itemize*}}
	\newcommand{\eit}{\end{itemize*}}
\newcommand{\hide}[1]{}
\newcolumntype{C}[1]{>{\centering\arraybackslash}p{#1}}
\newcolumntype{R}[1]{>{\RaggedLeft\arraybackslash}p{#1}}
\newcolumntype{L}[1]{>{\RaggedRight\arraybackslash}p{#1}}
\newcommand{\method}{{\sc $S^3$CMTF}\xspace}
\newcommand{\hogwild}{{\sc HOGWILD!}\xspace}
\newcommand{\algorithmicdoinparallel}{\textbf{do in parallel}}
	\newcommand{\FORP}[2][default]{\ALC@it\algorithmicfor\ #2\ %
		\algorithmicdoinparallel\ALC@com{#1}\begin{ALC@for}}%
\begin{document}
\title{Fast, Accurate, and Scalable Method for\\ Sparse Coupled Matrix-Tensor Factorization}

\author{Dongjin Choi}
\affiliation{%
	\institution{Seoul National University}
}
\email{skywalker5@snu.ac.kr}

\author{Jun-Gi Jang}
\affiliation{%
	\institution{Seoul National University}
}
\email{elnino9158@gmail.com}

\author{U Kang}
\affiliation{%
	\institution{Seoul National University}
}
\email{ukang@snu.ac.kr}

\begin{abstract}
    How can we capture hidden properties from a tensor and a matrix data simultaneously in a fast, accurate, and scalable way? 
Coupled matrix-tensor factorization (CMTF) is a major tool to extract latent factors from a tensor and matrices at once.
Designing an accurate and efficient CMTF method has become more crucial as the size and dimension of real-world data are growing explosively. However, existing methods for CMTF suffer from
lack of accuracy, slow running time, and limited scalability. 

In this paper, we propose \method, a fast, accurate, and scalable CMTF method. \method achieves high speed by exploiting the sparsity of real-world tensors, and high accuracy by capturing inter-relations between factors. Also, \method accomplishes additional speed-up by lock-free parallel SGD update for multi-core shared memory systems.
We present two methods, \method-naive and \method-opt. \method-naive is a basic version of \method, and \method-opt improves its speed by exploiting intermediate data. We theoretically and empirically show that \method is the fastest, outperforming existing methods.
Experimental results show that \method is 11$\sim$43$\times$ faster and 2.1$\sim$4.1$\times$ more accurate than existing methods. \method shows linear scalability on the number of data entries and the number of cores. 
In addition, we apply \method to Yelp recommendation tensor data coupled with 3 additional matrices to discover interesting patterns. 
\end{abstract}

%
%
\maketitle
\vspace*{-0.3cm}
\section{Introduction}
    \label{sec:intro}
    Given a tensor data, and related matrix data, how can we analyze them efficiently?
Tensors (i.e., multi-dimensional arrays) and matrices are natural representations for
various real world high-order data.
For instance, an online review site Yelp provides rich information about users (name, friends, reviews, etc.), or about businesses (name, city, Wi-Fi, etc.). One popular representation of such data includes a 3-way rating tensor with (user ID, business ID, time) triplets and an additional friendship matrix with (user ID, user ID) pairs.
Coupled matrix-tensor factorization (CMTF) is an effective tool for joint analysis of coupled matrices and a tensor. The main purpose of CMTF is to integrate matrix factorization \cite{koren2009matrix} and tensor factorization \cite{kolda2009tensor} to efficiently extract the factor matrices of each mode.
The extracted factors have many useful applications such as latent semantic analysis \cite{ding2008equivalence, peng2011equivalence,xu2003document}, recommendation systems \cite{karatzoglou2010multiverse,rendle2010pairwise}, 
network traffic analysis~\cite{Sael201582}, 
and completion of missing values \cite{acar2011all, acar2013understanding, narita2012tensor}.

However, existing CMTF methods do not provide good performance in terms of time, accuracy, and scalability. CMTF-Tucker-ALS \cite{ozcaglar2012algorithmic}, a method based on Tucker decomposition \cite{de2000best}, has a limitation that it is only applicable for dense data.
For sparse real-world data, it assumes empty entries as zero and outputs highly skewed results which are impractical.
Moreover, CMTF-Tucker-ALS does not scale to large data because it suffers from high memory requirement by \textit{M-bottleneck problem} \cite{oh2017s} (see Section \ref{subsec:CMTF} for details).
CMTF-OPT \cite{acar2011all} is a CMTF method based on CANDECOMP/PARAFAC (CP) decomposition \cite{kolda2009tensor}. It has a limitation that it does not take advantage of all inter-relations between related factors because CP decomposition model represents a specific case of the Tucker model in which each factor is related to only a few number of other factors. Therefore, CMTF-OPT undergoes a low model capacity and results in high test error.

In this paper, we propose \method (\textbf{S}parse, lock-free \textbf{S}GD based, and \textbf{S}calable CMTF),
a fast, accurate, and scalable CMTF method which resolves the problems of previous methods.
\method performs parallel stochastic gradient descent (SGD) update, thereby providing much better time complexity than previous methods.
\method has two versions: a basic implementation \method-naive, and an improved version \method-opt which exploits intermediate data for efficient computation. Table \ref{tab:salesman} shows the comparison of \method and other existing methods. The main contributions of our study are as follows:
\begin{table}[tbp]
	\small
	\setlength{\tabcolsep}{1pt}
	\caption{Comparison of our proposed $\mathbf{S^3}$CMTF and the existing CMTF methods. $\mathbf{S^3}$CMTF outperforms all other methods in terms of time, accuracy, scalability, memory usage, and parallelizability.}
	\begin{center}
		{
			\begin{tabular}{L{2.3cm} L{0.8cm} L{1.25cm} L{1.45cm} L{1.1cm} L{1.1cm} }
				\toprule
				\textbf{Method} & \textbf{Time} & \textbf{Accuracy} & \textbf{Scalability} & \textbf{Memory} & \textbf{Parallel} \\
				\midrule
				CMTF-Tucker-ALS & slow & low & low & high & no \\
				
				CMTF-OPT & slow & low & low & high & no \\
				
				\midrule
				
				\textbf{$\mathbf{S^3}$CMTF-naive} & fast & \textbf{high} & \textbf{high} & \textbf{lower} & \textbf{yes} \\
				
				\textbf{$\mathbf{S^3}$CMTF-opt} & \textbf{faster} & \textbf{high} & \textbf{high} & low & \textbf{yes} \\
				
				\bottomrule
				
			\end{tabular}
		}
	\end{center}
	\label{tab:salesman}
\end{table}
\begin{figure*} [!t]
	\includegraphics[width=1 \textwidth]{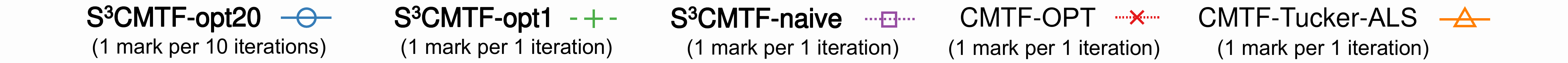}
	\subfloat[\textbf{MovieLens}]
	{	\includegraphics[width=0.30 \textwidth]{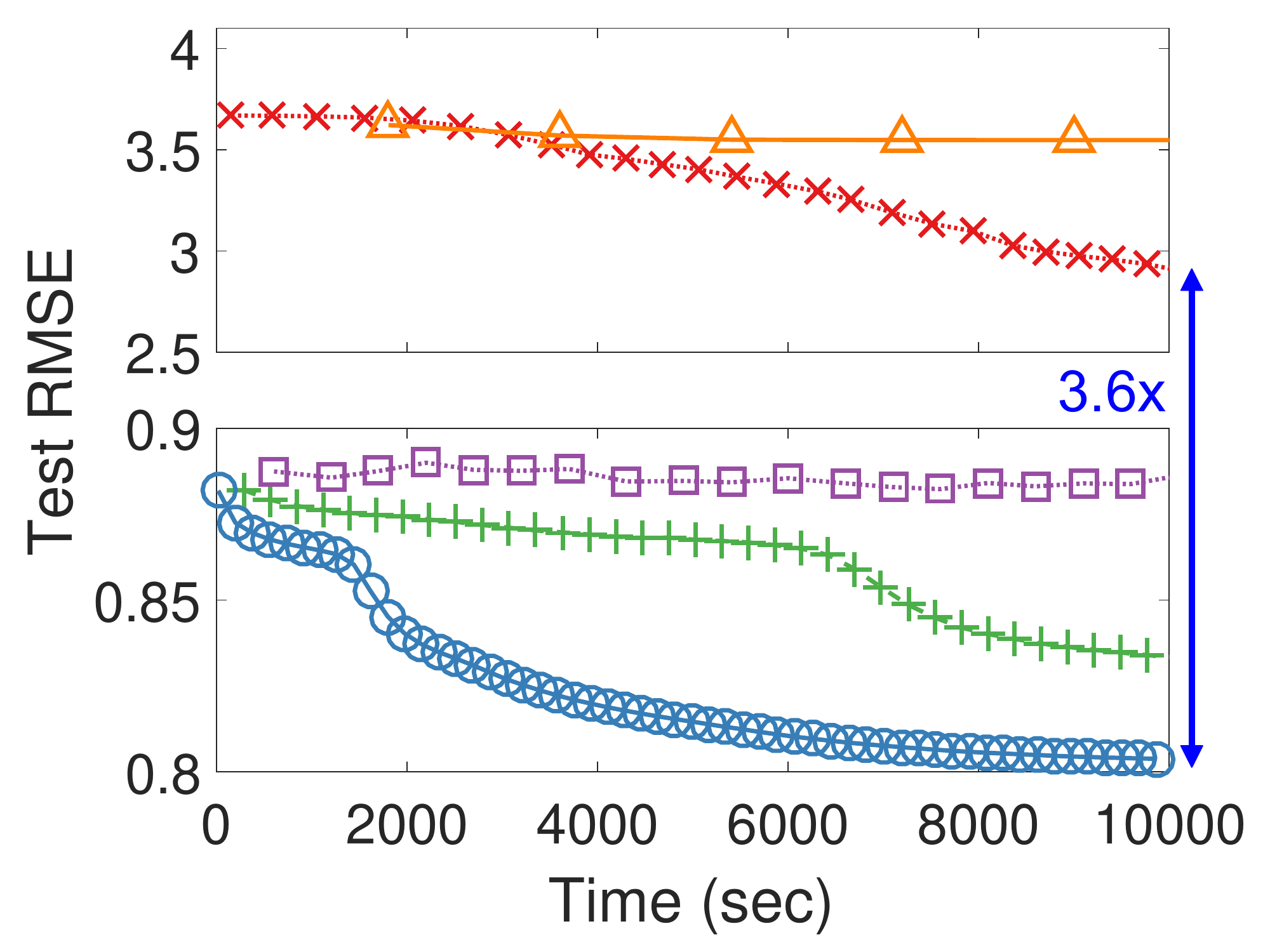}
	}
	\subfloat[\textbf{Netflix}]
	{	\includegraphics[width=0.30 \textwidth]{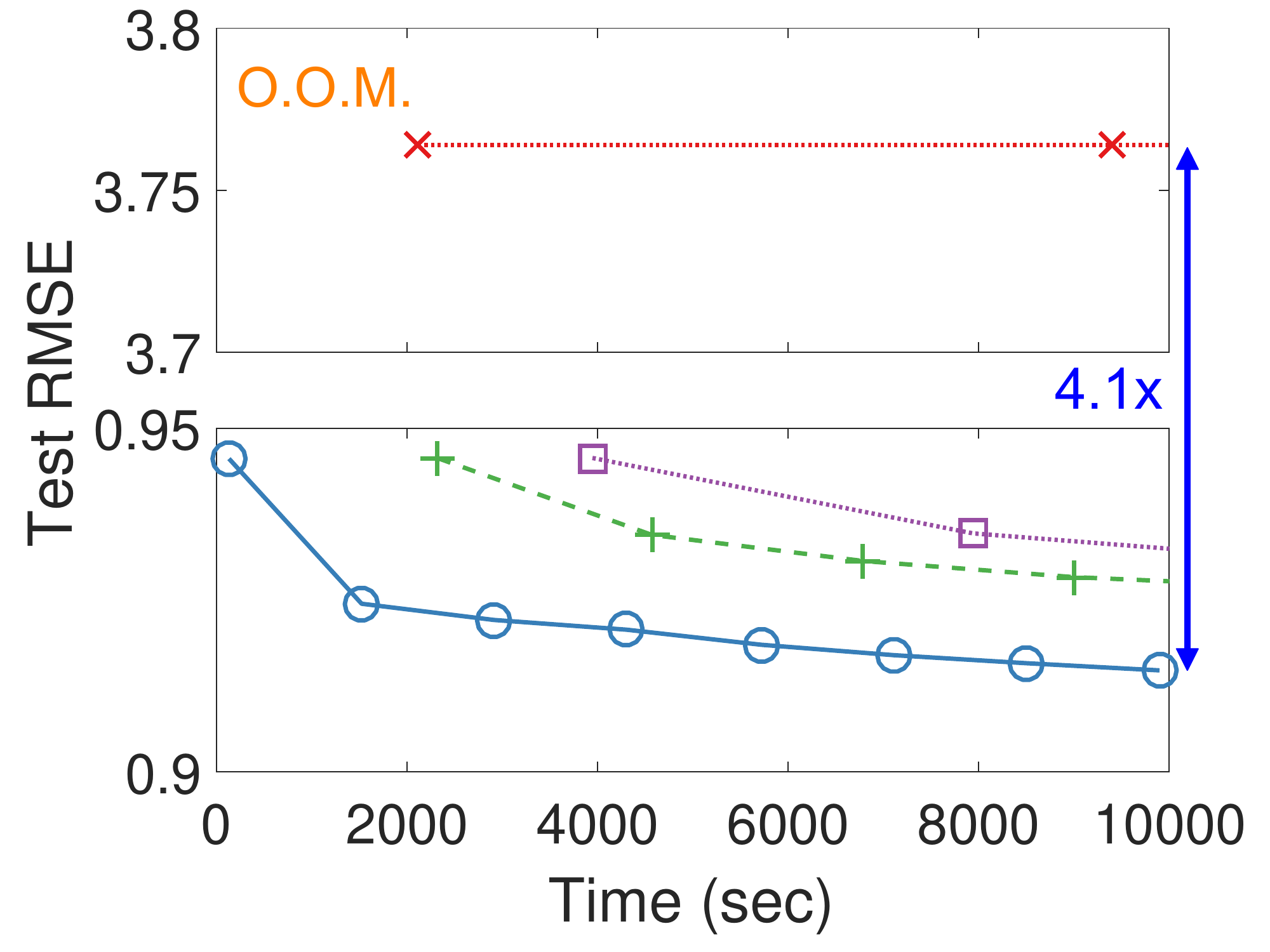}
	}
	\subfloat[\textbf{Yelp}]
	{	\includegraphics[width=0.30 \textwidth]{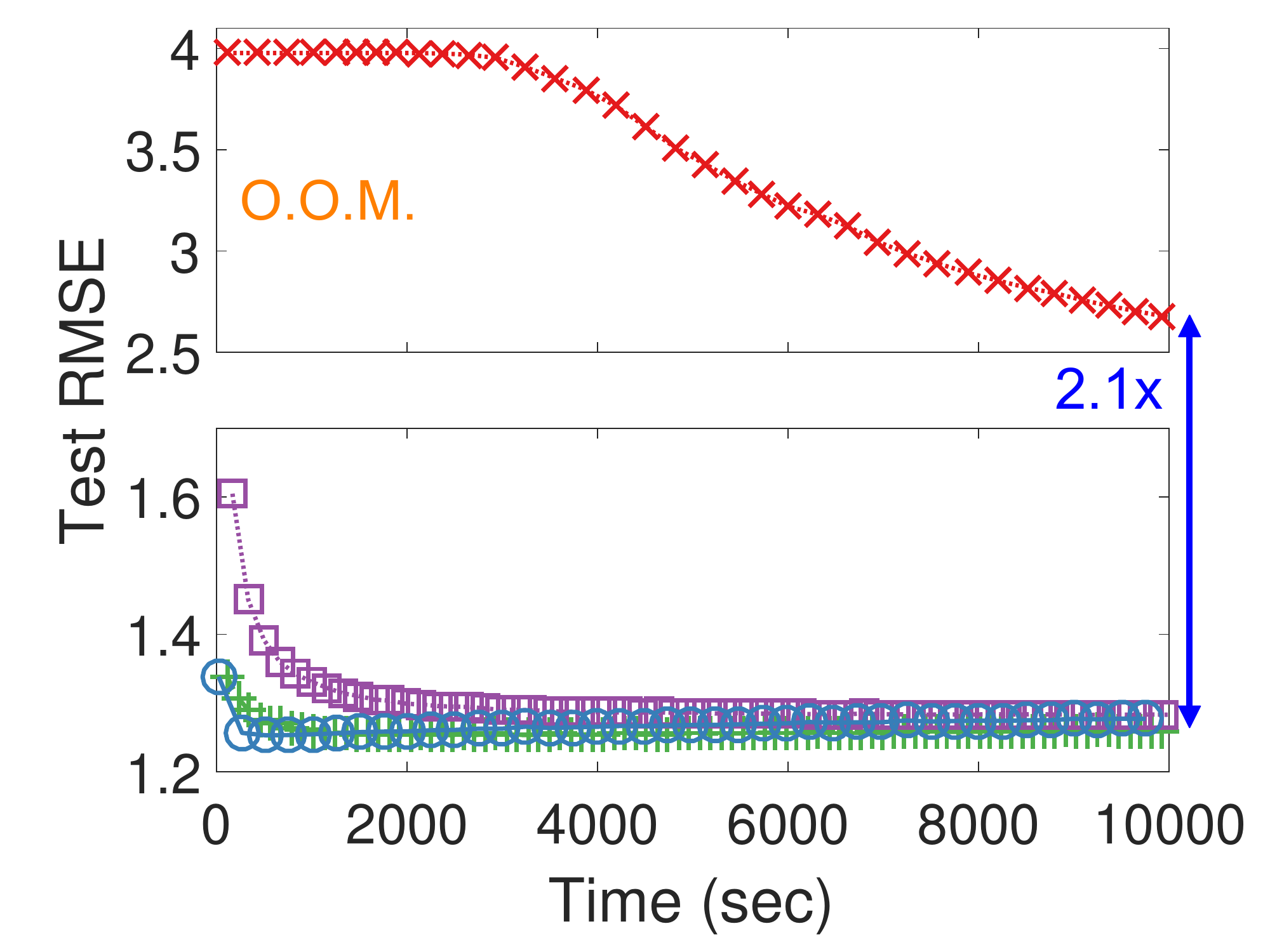}
	}
	\caption{Test RMSE of $\mathbf{S^3}$CMTF and other CMTF methods over iterations. $\mathbf{S^3}$CMTF-opt20 shows the best convergence rate and accuracy. $\mathbf{S^3}$CMTF factorizes real-world data with 2.1$\sim$4.1$\times$ less error than competitors. Note that we set one mark per 10 iterations for $\mathbf{S^3}$CMTF-opt20. O.O.M.: out of memory error.}
	\label{fig:Time Error}
\end{figure*}
\bit
\vspace{-0.1cm}
\item \textbf{Algorithm:} We propose \method, a fast, accurate, and scalable coupled tensor-matrix factorization algorithm for matrix-tensor joint datasets. \method is designed to efficiently extract factors from the joint datasets by taking advantage of sparsity, exploiting intermediate data, and parallelization.
\item \textbf{Performance:} \method empirically shows the best performance on accuracy, speed, and scalability. Especially for real-world data, \method gives \textbf{2.1$\sim$4.1$\times$ less error}, and works \textbf{11$\sim$43$\times$ faster} than existing methods as shown in Figures \ref{fig:Time Error} and \ref{fig:IterTime}.
\item \textbf{Discovery:} Applying \method on Yelp review dataset with a 3-mode tensor (user, business, time) coupled with 3 additional matrices ((user, user), (business, category), and (business, city)),
we observe interesting patterns and clusters of businesses and suggest a process for personal recommendation.
\eit

The rest of paper is organized as follows. Section \ref{sec:prelim} gives the preliminaries and related works of the tensor and CMTF. Section \ref{sec:proposed} describes our proposed \method method for fast, accurate and scalable CMTF. Section \ref{sec:experiments} shows the results of performance experiments for our proposed method. After presenting the discovery results in Section \ref{sec:case}, we conclude in Section \ref{sec:conclusions}. 
\vspace*{-0.3cm}
\section{Preliminaries and Related Works}
    \label{sec:prelim}
    In this section, we describe preliminaries for tensor and coupled matrix-tensor factorization. 
We list all symbols used in this paper in Table \ref{tab:symbol_table}.

\begin{table}[h]
	\small
	\setlength{\tabcolsep}{1pt}
	\centering
	\caption{Table of symbols.}
	\label{tab:table1}
	\begin{center}
		\begin{tabular}{cl}
			\toprule
			\textbf{Symbol} & \textbf{Definition}\\
			\midrule
			$\T{X}$ & input tensor\\
			$\T{G}$ & core tensor\\
			$N$ & order (number of modes) of the input tensor\\
			$I_{n}$ & dimensionality of $n$-th mode of input tensor $\T{X}$\\
			$J_{n}$ & dimensionality of $n$-th mode of core tensor $\T{G}$\\
			$\alpha$ & a tensor index ($i_1i_2 \cdots i_N$)\\
			$x_{\alpha}$ & the entry of $\T{X}$ with index $\alpha$\\
			$\mathbf{X}_{(n)}$ & mode-$n$ matricization of a tensor\\
			$\mathbf{U}^{(n)}$ & $n$-th factor matrix of $\T{X}$\\
			$\{\mathbf{U}\}$ & set of all factor matrices of $\T{X}$\\
			$\mathbf{u}^{(n)}_{i}$ & the $i$-th row vector of $\mathbf{U}^{(n)}$\\
			$\{\mathbf{u}\}_{\alpha}$ & ordered set of row vectors $\{\mathbf{u}^{(1)}_{i_1},\mathbf{u}^{(2)}_{i_2},\dots,\mathbf{u}^{(N)}_{i_N} \}$\\
			$\{\mathbf{u}\}_{\alpha}^{\mathsf{T}}$ & ordered set of column vectors $\{\mathbf{u}^{(1)\mathsf{T}}_{i_1},\mathbf{u}^{(2)\mathsf{T}}_{i_2},\dots,\mathbf{u}^{(N)\mathsf{T}}_{i_N} \}$\\
			$u_{ij}^{(n)}$ & entry of $\mathbf{U}^{(n)}$ with index ($i,j$)\\
			$\mathbf{Y}$ & coupled matrix\\
			$\beta$ & a matrix index $k_1k_2$\\
			$y_{\beta}$ & the entry of $\mathbf{Y}$ with index $\beta$\\
			$\mathbf{V}$ & factor matrix for the coupled matrix $\mathbf{Y}$\\
			$\mathbf{v}_{k}$ & the $k$-th row vector of $\mathbf{V}$\\
			$\Omega_{\T{X}}$ & index set of $\T{X}$\\
			$\Omega_{\T{X}}^{n,i}$ & subset of $\Omega_{\T{X}}$ having $i$ as the $n$-th index\\
			\bottomrule
		\end{tabular}
	\end{center}
	
	\label{tab:symbol_table}
	
\end{table}

\subsection{Tensor}
A tensor is a multi-dimensional array. Each \lq dimension\rq\:of a tensor is called $mode$ or $way$. The length of each mode is called \lq dimensionality\rq\:and denoted by $I_1,\cdots,I_N $.
In this paper, an $N$-mode of $N$-way tensor is denoted by the boldface Euler script capital (e.g. $\T{X}\in\mathbb{R}^{I_{1}{\times}I_{2}{\times}\cdots \times{I_{N}}}$), and matrices are denoted by boldface capitals (e.g. $\mathbf{A}$). $x_{\alpha}$ and $a_{\beta}$ denote the entry of $\T{X}$ and $\mathbf{A}$ with indices $\alpha$ and $\beta$, respectively.

We describe tensor operations used in this paper.
A mode-$n$ fiber is a vector which has fixed indices except for the $n$-th index in a tensor.
The mode-$n$ matrix product of a tensor $\T{X}\in\mathbb{R}^{I_{1}{\times}I_{2}{\times}\cdots \times{I_{N}}}$ with a matrix $\mathbf{A}\in\mathbb{R}^{J{\times}I_{n}}$ is denoted by $\T{X}{\times}_n\mathbf{A}$ and has the size of $I_{1}{\times}{\cdots}I_{n-1}{\times}J{\times}I_{n+1}\cdots\times{I_{N}}$. It is defined:
\begin{equation} \label{eqn:n mode matrix product elementwise}
(\T{X} \times_n \mathbf{A})_{i_1 \dots i_{n-1} j i_{n+1} \dots i_N} = \sum\limits_{i_n=1}^{I_n} x_{i_1 i_2 \dots i_N}a_{ji_n}
\end{equation}
where $a_{ji_n}$ is the $(j, i_n)$-th entry of $\mathbf{A}$.
For brevity, we use following shorthand notation for multiplication on every mode as in \cite{kolda2008scalable}:
\begin{equation} \label{eqn:n mode matrix set product}
\T{X}\times\{\mathbf{A}\}:=\T{X}\times_{1} \mathbf{A}^{(1)} \times_2 \mathbf{A}^{(2)} \cdots \times_N \mathbf{A}^{(N)}
\end{equation}
where $\{\mathbf{A}\}$ denotes the ordered set $\{\mathbf{A^{(1)}},\mathbf{A}^{(2)},\cdots,,\mathbf{A}^{(N)}\}$.

We use the following notation for multiplication on every mode except $n$-th mode.
\begin{equation*}
\T{X}\times_{-n}\{\mathbf{A}\}:=\T{X}\times_{1} \mathbf{A}^{(1)} \cdots \times_{n-1} \mathbf{A}^{(n-1)} \times_{n+1} \mathbf{A}^{(n+1)} \cdots \times_N \mathbf{A}^{(N)}
\end{equation*}
We examine the case that an ordered set of row vectors $\{\mathbf{a
	^{(1)}},\mathbf{a^{(2)}},\cdots$ $,\mathbf{a^{(N)}}\}$, denoted by $\{\mathbf{a}\}$, is multiplied to a tensor $\T{X}$.
First, consider the multiplication for every corresponding mode. By Equation \eqref{eqn:n mode matrix product elementwise},
\begin{equation*}
\T{X}\times\{\mathbf{a}\}=\sum_{i_1=1}^{I_1}\sum_{i_2=1}^{I_2}\cdots\sum_{i_N=1}^{I_N}x_{i_1i_2\cdots i_N}a_{i_1}^{(1)}a_{i_2}^{(2)}\cdots a_{i_N}^{(N)}
\end{equation*}
where $a_{k}^{(m)}$ denotes the $k$-th element of $\mathbf{a}^{(m)}$. Then, consider the multiplication for every mode except $n$-th mode. Such multiplication results to a vector of length $I_n$. The $k$-th entry of the vector is
\begin{equation}\label{eqn:n mode except row set product}
\big[\T{X}\times_{-n}\{\mathbf{a}\}\big]_{k}=\sum_{\forall\alpha\in\Omega_{\T{X}}^{n,k}}{x_{\alpha}a^{(1)}_{i_1}\cdots a^{(n-1)}_{i_{n-1}}a^{(n+1)}_{i_{n+1}}\cdots a^{(N)}_{i_{N}}}
\end{equation}
where $\Omega_{\T{X}}^{n,k}$ denotes the index set of $\T{X}$ having its $n$-th index as $k$. $\alpha=(i_1i_2\cdots i_N)$ denotes the index for an entry.

\subsection{Tucker Decomposition}
Tucker decomposition is one of the most popular tensor factorization models and is also known as Tucker decomposition. Tucker decomposition approximates an $N$-mode tensor $\T{X}\in\mathbb{R}^{I_{1}{\times}I_{2}{\times}\cdots \times{I_{N}}}$ into a core tensor $\T{G}\in\mathbb{R}^{J_{1}{\times}J_{2}{\times}\cdots \times{J_{N}}}$ and factor matrices $\mathbf{U}^{(1)}\in\mathbb{R}^{I_{1}{\times}J_{1}},\mathbf{U}^{(2)}$\\$\in\mathbb{R}^{I_{2}{\times}J_{2}},\dots,\mathbf{U}^{(N)}\in\mathbb{R}^{I_{N}{\times}J_{N}}$ satisfying
\begin{equation*}
\T{X}\approx\tilde{\T{X}}=
\T{G}\times_{1} \mathbf{U}^{(1)} \times_2 \mathbf{U}^{(2)} \dots \times_N \mathbf{U}^{(N)}
=\T{G}\times\{\mathbf{U}\}
\end{equation*}
Element-wise formulation of Tucker model is
\begin{equation} \label{eqn:hosvd elementwise}
\begin{split}
x_{\alpha}\approx\tilde{x}_{\alpha}
&=\sum_{j_1=1}^{J_1}\sum_{j_2=1}^{J_2}\cdots\sum_{j_N=1}^{J_N}g_{j_1j_2\cdots j_N}u_{i_1j_1}^{(1)}u_{i_2j_2}^{(2)}\cdots u_{i_Nj_N}^{(N)}\\
&=\T{G}\times_{1} \mathbf{u}_{i_1}^{(1)} \times_2 \mathbf{u}_{i_2}^{(2)} \dots \times_N \mathbf{u}_{i_N}^{(N)}
:=\T{G}\times\{\mathbf{u}\}_\alpha
\end{split}
\end{equation}
where $\alpha$ is a tensor index $(i_1i_2\cdots i_N)$, and $\mathbf{u}_{i_n}^{(n)}$ denotes the $i_n$-th row of factor matrix $\mathbf{U}^{(n)}$. $\{\mathbf{u}\}_\alpha$ denotes the set of factor rows $\{\mathbf{u}_{i_1}^{(1)},\mathbf{u}_{i_2}^{(2)},\cdots,\mathbf{u}_{i_N}^{(N)}\}$.
Note that the core tensor $\T{G}$ implies the relation between the factors in Tucker formulation.
When the core tensor size satisfies $J_1=J_2=\cdots=J_N$ and the core tensor $\T{G}$ is hyper-diagonal, it is equivalent to CANDECOMP/PARAFAC (CP) decomposition.
There is orthogonality constraint for Tucker decomposition: each factor matrix is a column-wise orthogonal matrix (e.g. $\mathbf{U}^{(n)T} \mathbf{U}^{(n)} = \mathbf{I}$ for $n=1,\cdots,N$ where $\mathbf{I}$ is an identity matrix).

\subsection{Coupled Matrix-Tensor Factorization}\label{subsec:CMTF}
Coupled matrix-tensor factorization (CMTF) is proposed for collective factorization of a tensor and matrices. CMTF integrates matrix factorization and tensor factorization.
\begin{definition}
	\setlength{\abovedisplayskip}{0.03cm}
	\setlength{\belowdisplayskip}{0.03cm}
	\textbf{(Coupled Matrix-Tensor Factorization)}
	Given an N-mode tensor $\T{X}\in\mathbb{R}^{I_1\times\cdots\times I_N}$ and a matrix $\mathbf{Y}\in\mathbb{R}^{I_c\times K}$ where $c$ is the coupled mode, $\T{X}\approx\tilde{\T{X}}=\T{G}\times\{\mathbf{U}\}$, $\mathbf{Y}\approx\tilde{\mathbf{Y}}=\mathbf{U}^{(c)}\mathbf{V}^{\mathsf{T}}$ are the coupled matrix-tensor factorization.
	$\mathbf{U}^{(c)}\in\mathbb{R}^{I_c\times J_c}$ is the $c$-th mode factor matrix, and $\mathbf{V}\in\mathbb{R}^{K\times J_c}$ denotes the factor matrix for coupled matrix. Finding the factor matrices and core tensor for CMTF is equivalent to solving
	\begin{equation} \label{eqn:cmtf equation}
	\underset{\mathbf{U}^{(1)},\cdots,\mathbf{U}^{(N)},\mathbf{V},\T{G}} {\arg \min}\|\T{X}-\T{G}\times\{\mathbf{U}\}\|^2+\|\mathbf{Y}-\mathbf{U}^{(c)}\mathbf{V}^{\mathsf{T}}\|^2
	\end{equation}
	where $\|\bullet\|$ denotes the Frobenius norm.
\end{definition}

Various methods have been proposed to efficiently solve the CMTF problem.
An alternating least squares (ALS) method CMTF-Tucker-ALS \cite{ozcaglar2012algorithmic} is proposed.
CMTF-Tucker-ALS is based on Tucker-ALS (HOOI) \cite{de2000best} which is a popular method for solving Tucker model. Tucker-ALS suffers from a crucial intermediate memory-bottleneck problem known as \textit{M-bottleneck problem} \cite{oh2017s} that arises from materialization of a large dense tensor $\T{X}\times_{-n}\{\mathbf{U}\}^{\mathsf{T}}$ as intermediate data where $\{\mathbf{U}\}^{\mathsf{T}}=\{\mathbf{U}^{(1)\mathsf{T}},\mathbf{U}^{(2)\mathsf{T}},\cdots,\mathbf{U}^{(N)\mathsf{T}}\}$.

Most existing methods use CP decomposition model for $\tilde{\T{X}}$ where $J_1=J_2=\cdots=J_N$ and the core tensor $\T{G}$ is hyper-diagonal \cite{acar2011all, jeon2016scout, jeon2015haten2, papalexakis2014turbo, beutel2014flexifact}.
CMTF-OPT \cite{acar2011all} is a representative algorithm for CMTF using CP decomposition model which uses gradient descent method to find factors.
HaTen2 \cite{jeon2015haten2,DBLP:journals/vldb/JeonPFSK16}, and SCouT \cite{jeon2016scout} propose distributed methods for CMTF using CP decomposition model.
Turbo-SMT \cite{papalexakis2014turbo} provides a time-boosting technique for CP-based CMTF methods.

Note that Equation \eqref{eqn:cmtf equation} requires entire data entries of $\T{X}$ and $\mathbf{Y}$.
It shows low accuracy when $\T{X}$ and $\mathbf{Y}$ are sparse since empty entries are set to zeros even when they are irrelevant. For example, an empty entry in movie rating data does not mean score 0. For the reason above methods show low accuracy for real-world sparse data; what we focus on this paper is solving CMTF for sparse data.
\begin{definition}\label{def:sparse cmtf}
	\setlength{\abovedisplayskip}{0.03cm}
	\setlength{\belowdisplayskip}{0.03cm}
	\textbf{(Sparse CMTF)}
	When $\T{X}$ and $\mathbf{Y}$ are sparse, sparse CMTF aims to find factors only considering observed entries. Let $\T{W}^{(1)}$ and $\mathbf{W}^{(2)}$ indicate the observed entries of $\T{X}$ and $\mathbf{Y}$ such that
	\begin{alignat*}{2}
	w^{(1)}_{\alpha} \big(w^{(2)}_{\beta}\big)=
	\begin{cases}
	1 &\text{if $x_{\alpha}$\big($y_{\beta}$\big) is known}\\
	0 &\text{if $x_{\alpha}$\big($y_{\beta}$\big) is missing}
	\end{cases} && \text{, for } \forall \alpha\in\Omega_{\T{X}}\big(\forall\beta\in\Omega_{\mathbf{Y}}\big)
	\end{alignat*}
	We modify Equation \eqref{eqn:cmtf equation} as
	\begin{equation} \label{eqn:sparse cmtf equation}
	\underset{\mathbf{U}^{(1)},\cdots,\mathbf{U}^{(N)},\mathbf{V},\T{G}} {\arg \min}\|\T{W}^{(1)}\ast(\T{X}-\T{G}\times\{\mathbf{U}\})\|^2+\|\mathbf{W}^{(2)}\ast(\mathbf{Y}-\mathbf{U}^{(c)}\mathbf{V}^{\mathsf{T}})\|^2
	\end{equation}
	where $\ast$ denotes the Hadamard product (element-wise product).
\end{definition}

CMTF-Tucker-ALS does not support sparse CMTF. For CP model, CMTF-OPT provides single machine approach for sparse CMTF, and CDTF \cite{DBLP:journals/tkde/ShinSK17} and FlexiFaCT \cite{beutel2014flexifact} provide distributed methods for sparse CMTF. However, CP model suffers from high error because it does not capture the correlations between different factors of different modes because its core tensor has only hyper-diagonal nonzero entries \cite{kiers1997uniqueness}.

\section{Proposed Method}
    \label{sec:proposed}
    \begin{figure} [t]
	\begin{center}
		\includegraphics[width=0.48 \textwidth]{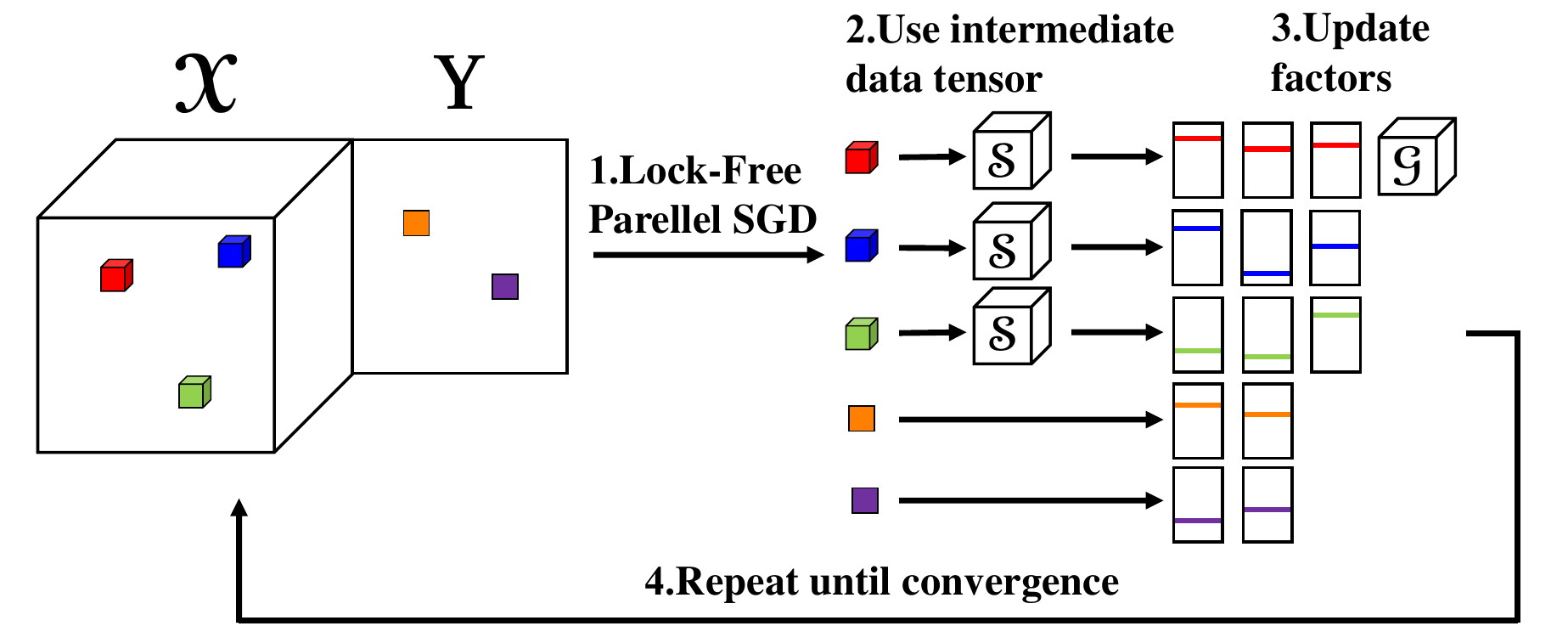}
	\end{center}
	\caption{The scheme for $\mathbf{S^3}$CMTF.}
	\label{fig:scheme}
\end{figure}
\subsection{Overview}
In this section, we describe \method (Sparse, lock-free SGD based, and Scalable CMTF), our proposed method for fast, accurate, and scalable CMTF method.
CMTF methods for dense data are prone to get high errors because of zero-filling for empty entries. On the other hand, CP-based methods show high prediction error because of simplicity of the model \cite{kiers1997uniqueness}.
Our purpose is to devise an improved sparse CMTF model and propose a fast and scalable algorithm for the model.

We propose a basic version of our method \method-naive and a time-improved version \method-opt. Figure \ref{fig:scheme} shows the overall scheme for \method. \method-naive adopts lock-free parallel SGD for the parallel update, and \method-opt further improves the speed of \method-naive by exploiting intermediate data and reusing them.

\subsection{Objective Function \& Gradient}\label{subsec:objective function}
We discuss the improved formulation of the sparse CMTF problem defined in Definition \ref{def:sparse cmtf}. For simplicity, we consider the case that one matrix $\mathbf{Y}\in\mathbb{R}^{I_c\times K}$ is coupled to the $c$-th mode of a tensor $\T{X}\in\mathbb{R}^{I_1\times\cdots\times I_N}$.
Equation \eqref{eqn:sparse cmtf equation} takes excessive time and memory because it includes materialization of dense tensor $\T{G\times{\{\mathbf{U}\}}}$. Therefore, we formulate the new CMTF objective function $f$ to exploit the sparsity of data. $f$ is the weighted sum of two functions $f_t$ and $f_m$ where they are element-wise sums of squared reconstruction error and regularization terms of tensor $\T{X}$ and matrix $\mathbf{Y}$, respectively.
\begin{equation}\label{eqn:cost function}
f=\frac{1}{2}f_t+\frac{\lambda_m}{2}f_m
\end{equation}
where $\lambda_{m}$ is a balancing factor of two functions.
\begin{equation*}
f_t=\Big[\sum\limits_{\forall\alpha\in\Omega_{\T{X}}}\big(x_{\alpha}-(\T{G}\times\{\mathbf{u}\}_{\alpha})\big)^2\Big]+\lambda_{reg}\Big(\|\T{G}\|^2+\sum_{n=1}^{N}\|\mathbf{U}^{(n)}\|^2\Big)
\end{equation*}
where $\alpha=(i_{1}\cdots i_{N})$, $\Omega_{\T{X}}$ is the nonzero index set of $\T{X}$, and $\lambda_{reg}$ denotes the regularization parameter for factors. We rewrite the equation so that it is amenable to SGD update.
\begin{equation*}
f_{t}= \sum\limits_{\forall\alpha\in\Omega_{\T{X}}}\Big[\big(x_{\alpha}-(\T{G}\times\{\mathbf{u}\}_{\alpha})\big)^2+\frac{\lambda_{reg}}{|\Omega_{\T{X}}|}\|\T{G}\|^2+\lambda_{reg}\sum_{n=1}^{N}\frac{\|\mathbf{u}_{i_n}^{(n)}\|^2}{|\Omega_{\T{X}}^{n,i_n}|}\Big]
\end{equation*}
where $\alpha=(i_{1}\cdots i_{N})$. Note that $\Omega_{\T{X}}^{n,i_n}$ is the subset of $\Omega_{\T{X}}$ having $i_n$ as the $n$-th index. Now we formulate $f_{m}$, the sum of squared errors of coupled matrix and regularization term corresponding to the coupled matrix.
\begin{equation*}
f_{m}=\sum_{\forall\beta=(j_{1}j_{2})\in\Omega_{\mathbf{Y}}}\Big[\big(y_{\beta}-\mathbf{u}^{(c)}_{j_1}\mathbf{v}_{j_2}^{\mathsf{T}}\big)^2+\frac{\lambda_{reg}}{|\Omega_{\mathbf{Y}}^{2,j_2}|}\|\mathbf{v}_{j_2}\|^2\Big]
\end{equation*}
We calculate the gradient of $f$ (Equation \eqref{eqn:cost function}) with respect to factors for stochastic gradient descent update. Consider that we pick one index among tensor index $\alpha=(i_1\cdots i_N)\in\Omega_{\T{X}}$ and matrix index $\beta=(j_{1}j_{2})\in\Omega_{\mathbf{Y}}$. We calculate the corresponding partial derivatives of $f$ with respect to the factors and the core tensor as follows.
\begin{equation}\label{eqn:gradient}
\begin{split}
\left.\frac{\partial f}{\partial \mathbf{u}^{(n)}_{i_n}}\right\vert_{\alpha}&=
-\big(x_{\alpha}-(\T{G}\times\{\mathbf{u}\}_{\alpha})\big)\big{\lbrack}(\T{G}\times_{-n}\{\mathbf{u}\}_{\alpha})_{(n)}\big{\rbrack}^{\mathsf{T}}+\frac{\lambda_{reg}}{|\Omega_{\T{X}}^{n,i_n}|}\mathbf{u}^{(n)}_{i_n}\\
\left.\frac{\partial f}{\partial \T{G}}\right\vert_{\alpha}&=-\big(x_{\alpha}-(\T{G}\times\{\mathbf{u}\}_{\alpha})\big)\times\{\mathbf{u}\}^{\mathsf{T}}_{\alpha}+\frac{\lambda_{reg}}{|\Omega_{\T{X}}|}\T{G}\\
\left.\frac{\partial f}{\partial \mathbf{u}^{(c)}_{j_1}}\right\vert_{\beta}&=-\lambda_m(y_{\beta}-\mathbf{u}^{(c)}_{j_1}\mathbf{v}_{j_2}^{\mathsf{T}})\mathbf{v}_{j_2}\\
\left.\frac{\partial f}{\partial \mathbf{v}_{j_2}}\right\vert_{\beta}&=-\lambda_{m}(y_{\beta}-\mathbf{u}^{(c)}_{j_1}\mathbf{v}_{j_2}^{\mathsf{T}})\mathbf{u}^{(c)}_{j_1}+\frac{\lambda_{m}\lambda_{reg}}{|\Omega_{\mathbf{Y}}^{2,j_2}|}\mathbf{v}_{j_2}
\end{split}
\end{equation}

We omit the detailed derivation of Equations \eqref{eqn:gradient} for brevity. Note that our formulated coupled matrix-tensor factorization model is also applicable to dense data and easily generalized to the case that multiple matrices are coupled to a tensor.
We couple multiple matrices to a tensor for experiments in Sections \ref{sec:experiments} and \ref{sec:case}.

\subsection{Lock-Free Parallel Update}\label{subsec:parallel update}
How can we parallelize the SGD updates in multiple cores? In general, SGD approach is hard to be parallelized because each parallel update may suffer from memory conflicts by attempting to write the same variables to memory concurrently \cite{bradley2011parallel}.
One solution for this problem is memory locking and synchronization. However, there are much overhead associated with locking. Therefore, we use lock-free strategy to parallelize \method.
We develop parallel update scheme for \method by adapting \hogwild update scheme \cite{recht2011hogwild}.
\begin{figure} [t]
	\begin{center}
		\includegraphics[width=0.48 \textwidth]{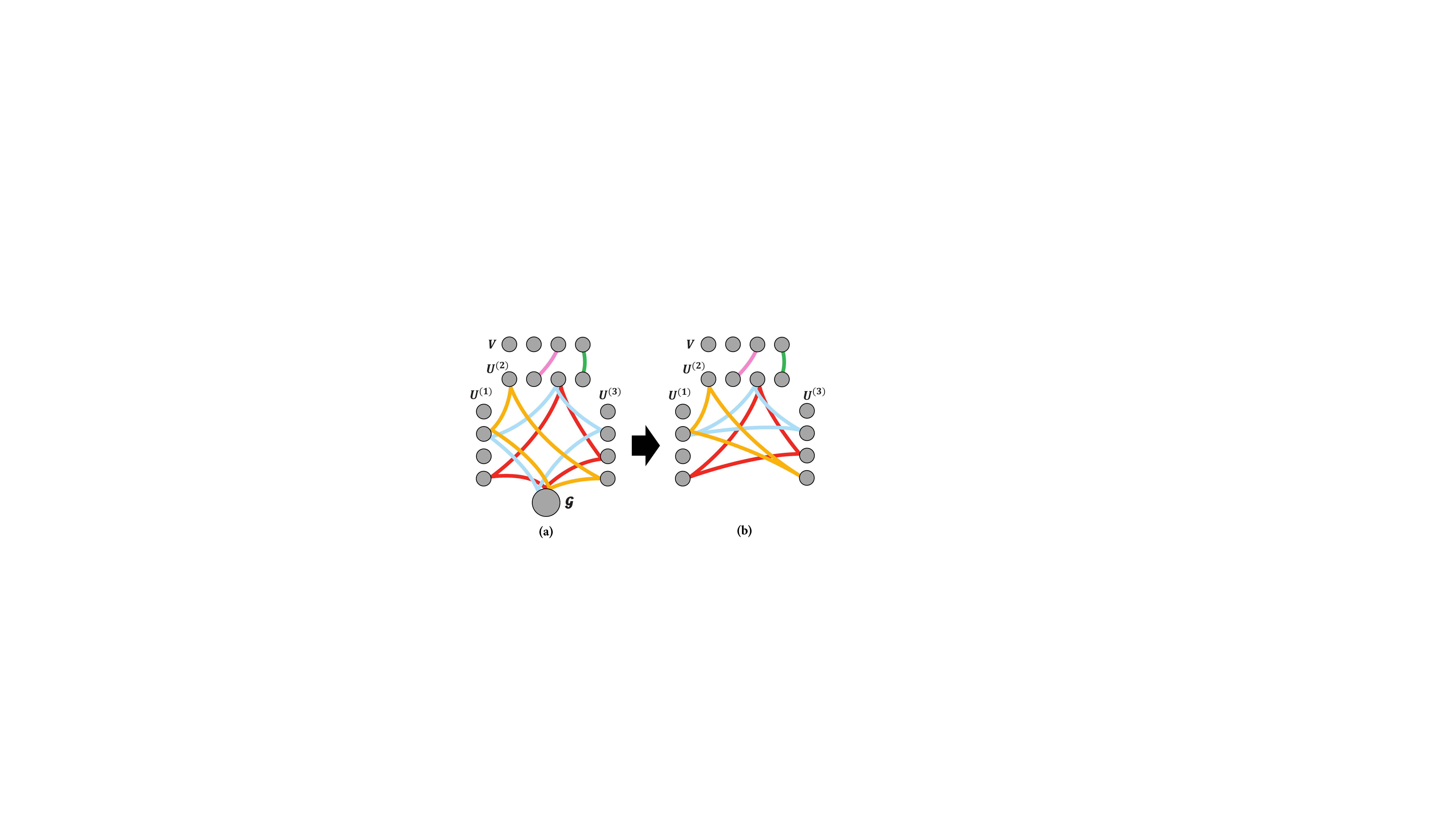}
	\end{center}
	\caption{Example graphs induced by $\mathbf{S^3}$CMTF objective function (Equation \eqref{eqn:cost function}). A matrix $\mathbf{Y}$ is coupled to the second mode of $\T{X}$ with a factor matrix $\mathbf{V}$. Each node represents a factor row or the core tensor. Each hyperedge includes corresponding factors to an SGD update. (a) Induced hypergraph with core tensor. Every hyperedge corresponding to tensor entries includes $\T{G}$. (b) Induced hypergraph without core tensor. The graph reveals sparsity as every node is shared by only few hyperedges.}
	\label{fig:inducedGraph}
\end{figure}

\begin{definition}
	\textbf{(Induced Hypergraph)}
	The objective function in Equation \eqref{eqn:cost function} induces a hypergraph $G=(V,E)$ whose nodes represent factor rows and core tensor. Each entry of $\T{X}$ and $\mathbf{Y}$ induces a hyperedge $e\in E$ consisting of corresponding factor rows or core tensor. Figure \ref{fig:inducedGraph}a shows an example induced graph of \method.
\end{definition}

Lock-free parallel update guarantees near linear convergence property of a sparse SGD problem in which conflicts between different updates rarely occur \cite{recht2011hogwild}.
However, in our formulation, every update of tensor entries includes the core tensor $\T{G}$ as shown in Figure \ref{fig:inducedGraph}a.
We allocate the update of core tensor $\T{G}$ to one core to solve the problem. Then we obtain a new induced hypergraph in Figure \ref{fig:inducedGraph}b.
The newly obtained hypergraph satisfies the sparsity condition for convergence. Lemma \ref{lemma:convergence} proves the convergence property of parallel updates.

\begin{lemma}\label{lemma:convergence}
	\textbf{(Convergence)}
	If we assume that the elements of the tensor $\T{X}$ and coupled matrix $\mathbf{Y}$ are sampled uniformly at random, lock-free parallel update of \method converges to a local optimum.
\end{lemma}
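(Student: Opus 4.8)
The plan is to reduce the statement to the \hogwild{} convergence guarantee of Recht et al.~\cite{recht2011hogwild} by exhibiting the \method{} update as a sparse, separable stochastic optimization of the kind for which lock-free asynchronous SGD is known to converge. First I would rewrite the objective $f$ of Equation~\eqref{eqn:cost function} in the decomposable form $f(\mathbf{z})=\sum_{e\in E}f_{e}(\mathbf{z}_{e})$, where $\mathbf{z}$ stacks all factor rows $\mathbf{u}^{(n)}_{i_n}$, the coupled rows $\mathbf{v}_{k}$, and the entries of the core tensor $\T{G}$; each hyperedge $e$ of the induced hypergraph of Figure~\ref{fig:inducedGraph} corresponds to one observed entry $x_{\alpha}$ or $y_{\beta}$; and $\mathbf{z}_{e}$ is the subvector of coordinates incident to $e$, namely $\{\mathbf{u}\}_{\alpha}$ together with $\T{G}$ for a tensor entry, and $\mathbf{u}^{(c)}_{j_1}$ together with $\mathbf{v}_{j_2}$ for a matrix entry. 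The partial derivatives collected in Equation~\eqref{eqn:gradient} are exactly the $\nabla f_{e}$, so one SGD step that samples an entry and updates only its incident coordinates is precisely one step of the \hogwild{} template.

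Next I would check the structural hypotheses, which are controlled by the maximum hyperedge size $\Omega=\max_{e}|e|$, the per-coordinate sparsity $\Delta=\max_{v}|\{e:v\in e\}|/|E|$, and the overlap $\rho=\max_{e}|\{e':e'\cap e\neq\emptyset\}|/|E|$. With $\T{G}$ present it lies in every tensor hyperedge, so $\Delta$ and $\rho$ are $\Theta(1)$ and the sparsity condition fails outright; this is the obstruction that the method's design is built to avoid. I would therefore argue that delegating every update of $\T{G}$ to a single dedicated core turns the $\T{G}$-block into a serial, conflict-free block-coordinate update and removes $\T{G}$ as a shared node from the lock-free portion of the hypergraph (Figure~\ref{fig:inducedGraph}b). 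On the remaining hypergraph, $\mathbf{u}^{(n)}_{i_n}$ occurs only in the $|\Omega_{\T{X}}^{n,i_n}|$ hyperedges that carry index $i_n$ in mode $n$ (and, for $n=c$, the matrix hyperedges routed through $\mathbf{v}$), and $\mathbf{v}_{j_2}$ occurs in $|\Omega_{\mathbf{Y}}^{2,j_2}|$ hyperedges; under the assumption that entries are sampled uniformly at random these counts are $O(|E|/I_{n})$ in expectation, so $\Delta,\rho=o(1)$ for large mode sizes, which is exactly the regime in which the asynchronous analysis loses only lower-order terms relative to serial SGD.

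Finally I would make explicit the regularity conditions needed to close the argument: the iterates stay in a bounded region (ensured in practice by the ridge terms $\lambda_{reg}\|\cdot\|^2$), on which each $f_{e}$ has bounded and Lipschitz gradient, and the asynchronous read-to-write delay is bounded by a constant $\tau$ set by the number of cores. Under these conditions the asynchronous iterates obey the same one-step descent recursion as serial SGD up to an additive perturbation of order $\tau\rho$ times the step size, so for a sufficiently small constant step or a diminishing schedule $\eta_{t}$ one gets $\mathbb{E}\|\nabla f(\mathbf{z}_{t})\|^{2}\to 0$; since the subvector transverse to the scaling gauge freedom of the factorization is locally strongly convex near such a point, the limit is a local optimum, as claimed. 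The main obstacle is that $f$ is non-convex whereas the original \hogwild{} theorem assumes strong convexity: I would handle this by invoking the bounded-delay perturbation analysis for \emph{non-convex} asynchronous SGD rather than the strongly convex version, and by stating the boundedness-of-iterates assumption that keeps the Lipschitz constants finite; the remaining work --- the hypergraph sparsity bookkeeping and the uniform-sampling bound on the conflict rate --- is routine.
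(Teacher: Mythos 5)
Your proposal follows essentially the same route as the paper's own proof: after dedicating the core-tensor update to a single core so that $\T{G}$ drops out of the shared portion of the induced hypergraph, you bound the \hogwild{} sparsity parameters $\Omega$, $\Delta$, $\rho$ under the uniform-sampling assumption and invoke the lock-free asynchronous SGD guarantee of Recht et al.~\cite{recht2011hogwild}. The only substantive difference is that you are more explicit about the non-convexity of the CMTF objective (the paper cites Proposition~4.1 of \cite{recht2011hogwild}, stated for convex objectives, without comment), which is a refinement of the same argument rather than a different approach.
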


\begin{proof}
	For brevity, we assume that the dimension and rank of each mode are $I$ and $J$, respectively. We use the notations used in Equation (2.6) of \cite{recht2011hogwild}. For a given hypergraph $G=(V,E)$, we define
	\begin{equation*}
	\Omega := \max_{e\in E}|e|,
	\Delta := \frac{ \max_{v\in V}|\{e \in E : v \in e\}|}{|E|}
	\end{equation*}
	\begin{equation*}
	\rho := \frac{ \max_{e\in E}|\{\tilde{e} \in E : \tilde{e} \cap e\neq\emptyset\}|}{|E|}
	\end{equation*}
	First, consider the case when the tensor order is 2. $\Omega$ has the same value, and $\Delta$ has doubled value of the matrix factorization problem in \cite{recht2011hogwild}: $\Omega\approx2J$, $\Delta\approx\frac{2\log(I)}{I}$. $\rho$ naturally satisfies $\rho\approx\frac{3\log{(I)}}{I}$. Next, when the tensor order is N, $\Omega$ linearly scales up and $\Omega\approx NJ$, $\Delta$ and $\rho$ stay same: $\Delta \approx \frac{2\log(I)}{I}$, $\rho \approx \frac{3\log(I)}{I}$. Parallel update converges as proved in Proposition 4.1 of \cite{recht2011hogwild}.
\end{proof}

\begin{algorithm} [t]
	\small
	\caption{\method-naive} \label{alg:s3cmtf}
	\begin{algorithmic}[1]
		\REQUIRE Tensor $\T{X} \in \mathbb{R}^{I_1 \times \cdots \times I_N}$, rank $(J_1,\cdots ,J_N)$, number of parallel cores $P$, initial learning rate $\eta_0$, decay rate $\mu$, coupled mode $c$, and coupled matrix $\mathbf{Y} \in \mathbb{R}^{I_c \times K}$
		
		\ENSURE Core tensor $\T{G} \in \mathbb{R}^{J_1 \times \cdots \times J_N}$, factor matrices $\mathbf{U}^{(1)},\cdots,\mathbf{U}^{(N)}$, $\mathbf{V}$
		
		\STATE Initialize $\T{G}$, $\mathbf{U}^{(n)} \in \mathbb{R}^{I_n \times J_n}$ for $n= 1,\cdots,N$, and $\mathbf{V}$ randomly
		
		\REPEAT
		
		\FORP{$\forall\alpha=(i_1\cdots i_N)\in\Omega_{\T{X}}$, $\forall\beta=(j_1j_2)\in\Omega_{\mathbf{Y}}$ in random order}
		
		\IF{$\alpha$ is picked}
		
		\STATE ($\frac{\partial{f}}{\partial{\mathbf{u}_{i_1}^{(1)}}}$,$\cdots$,$\frac{\partial{f}}{\partial{\mathbf{u}_{i_N}^{(N)}}}$,$\frac{\partial{f}}{\partial{\T{G}}}$)
		$\leftarrow$\textit{compute\_gradient}($\alpha$,$x_\alpha$,$\T{G}$)
		
		
		\STATE $\mathbf{u}^{(n)}_{i_n} \leftarrow \mathbf{u}^{(n)}_{i_n}-\eta_t\frac{\partial{f}}{\partial{\mathbf{u}_{i_n}^{(n)}}}$, (for $n=1, \cdots, N$)
		
		
		\STATE $\T{G} \leftarrow \T{G}-\eta_t P \frac{\partial{f}}{\partial{\T{G}}}$
		(executed by only one core)
		
		\ENDIF
		
		\IF{$\beta$ is picked}
		
		\STATE
		$\tilde{y}_{\beta}\leftarrow \mathbf{u}^{c}_{j_1}\mathbf{v}^{\mathsf{T}}_{j_2}$,~
		$\frac{\partial{f}}{\partial{\mathbf{u}_{j_1}^{(c)}}}
		\leftarrow
		-\lambda_m(y_{\beta}-\tilde{y}_{\beta})\mathbf{v}_{j_2}$
		
		\STATE
		$\frac{\partial{f}}{\partial{\mathbf{v}_{j_2}}}
		\leftarrow
		-\lambda_{m}(y_{\beta}-\tilde{y}_{\beta})\mathbf{u}^{(c)}_{j_1}+\frac{\lambda_{m}\lambda_{reg}}{|\Omega_{\mathbf{Y}_{2,j_2}}|}\mathbf{v}_{j_2}$
		
		\STATE $\mathbf{u}^{(c)}_{j_1} \leftarrow \mathbf{u}^{(c)}_{j_1}-\eta_t\frac{\partial{f}}{\partial{\mathbf{u}_{j_1}^{(c)}}}$,
		$\mathbf{v}_{j_2} \leftarrow \mathbf{v}_{j_2}-\eta_t\frac{\partial{f}}{\partial{\mathbf{v}_{j_2}}}$
		
		\ENDIF
		
		\ENDFOR
		
		\STATE $\eta_t=\eta_0(1+\mu t)^{-1}$
		
		\UNTIL{convergence conditions are satisfied}
		
		\FOR{$n = 1,\dots,N$}
		
		\STATE $\mathbf{Q}^{(n)}$,$\mathbf{R}^{(n)} \leftarrow$ QR decomposition of $\mathbf{U}^{(n)}$
		
		\STATE $\mathbf{U}^{(n)} \leftarrow \mathbf{Q}^{(n)}$
		, $\T{G}\leftarrow\T{G}\times_{n}\mathbf{R}^{(n)}$
		
		\ENDFOR
		
		\STATE $\mathbf{V}\leftarrow\mathbf{V}\mathbf{R}^{(c)\mathsf{T}}$
		
		\RETURN $\T{G}$, $\mathbf{U}^{(1)},\cdots,\mathbf{U}^{(N)}, \mathbf{V}$
		
	\end{algorithmic}
\end{algorithm}
\vspace*{-0.3cm}
\subsection{$\mathbf{S^3}$CMTF-naive}
We present a basic version of our method, \method-naive. \method-naive solves the sparse CMTF problem by parallel SGD techniques explained in Sections \ref{subsec:objective function}-\ref{subsec:parallel update}.
Algorithm \ref{alg:s3cmtf} shows the procedure of \method-naive.
In the beginning, \method-naive initializes factor matrices and core tensor randomly (line 1 of Algorithm \ref{alg:s3cmtf}). The outer loop (lines 2-16) repeats until the factor variables converge. The inner loop (lines 3-14) is conducted by several cores in parallel except for line 7. In each inner loop, \method-naive selects an index which belongs to $\Omega_{\T{X}}$ or $\Omega_{\mathbf{Y}}$ in random order (line 3). If a tensor index $\alpha$ is picked, then the algorithm calculates the partial gradients of corresponding factor rows using \textit{compute\_gradient} (Algorithm \ref{alg:compute_gradient}) in line 5, and updates factor row vectors (line 6).
Core tensor $\T{G}$ is updated by only one core (line 7);
the number $P$ of cores is multiplied to the gradient to compensate for the one-core update so that SGD uses the same learning rate for all the parameters.
If a coupled matrix index $\beta$ is picked, then the gradient update is conducted on corresponding factor row vectors (lines 9-13). At the end of the outer loop, the learning rate $\eta_t$ is monotonically decreased \cite{bottou2012stochastic}. (line 15). QR decomposition is applied on factors to satisfy orthogonality constraint of factor matrices (lines 17-20).
QR decomposition of $\mathbf{U}^{(n)}$ generates $\mathbf{Q}^{(n)}$, an orthogonal matrix of the same size as $\mathbf{U}^{(n)}$, and a square matrix $\mathbf{R}^{(n)}\in \mathbb{R}^{J_n \times J_n}$.
Substituting $\mathbf{U}^{(n)}$ by $\mathbf{Q}^{(n)}$ (line 19) and $\T{G}$ by $\T{G}\times_{1}\mathbf{R}^{(1)}\cdots\times_{N}\mathbf{R}^{(N)}$ (after $N$-th execution of line 19) result in an equivalent factorization~\cite{kolda2006multilinear}. 
In the same manner, we substitute $\mathbf{V}$ by $\mathbf{VR}^{(c)\mathsf{T}}$ (line 21) because $\mathbf{\tilde{Y}}=\mathbf{U}^{(c)}\mathbf{V}^{\mathsf{T}}
=\mathbf{Q}^{(c)}\mathbf{R}^{(c)}\mathbf{V}^{\mathsf{T}}
=\mathbf{Q}^{(c)}(\mathbf{V}\mathbf{R}^{(c)\mathsf{T}})^{\mathsf{T}}
$.
\vspace*{-0.2cm}

\subsection{$\mathbf{S^3}$CMTF-opt}
\textbf{Reusing the intermediate data.}
There are many redundant calculations in \method-naive. For example, $\T{G}\times_{-n}\{\mathbf{u}\}_{\alpha}$ is calculated for every execution of \textit{compute\_gradient} (Algorithm \ref{alg:compute_gradient}) in line 5 of Algorithm \ref{alg:s3cmtf}. In \method-opt, we save the time by storing the intermediate data of calculating $\tilde{x}_\alpha$ and reusing them.


\begin{definition}
	\textbf{(Intermediate Data)} When updating the factor rows for a tensor entry $x_{\alpha=(i_1\cdots i_N)}$, we define ($j_1j_2\cdots j_N$)-th element of intermediate data $\T{S}$:\\
	\centerline{$s_{j_1j_2\cdots j_N}\leftarrow g_{j_1j_2\cdots j_N}u_{i_1j_1}^{(1)}u_{i_2j_2}^{(2)}\cdots u_{i_Nj_N}^{(N)}$}
\end{definition}

\begin{algorithm} [t]
	\small
	\caption{\textit{compute\_gradient}($\alpha$,$x_\alpha$,$\T{G}$)} \label{alg:compute_gradient}
	\begin{algorithmic}[1]
		\REQUIRE Tensor entry $x_\alpha$, $\alpha=(i_1\cdots i_N)$$\in\Omega_{\T{X}}$, core tensor $\T{G}$
		
		\ENSURE Gradients $\frac{\partial{f}}{\partial{\mathbf{u}_{i_1}^{(1)}}}$,$\frac{\partial{f}}{\partial{\mathbf{u}_{i_2}^{(2)}}}$,$\cdots$,$\frac{\partial{f}}{\partial{\mathbf{u}_{i_N}^{(N)}}}$,$\frac{\partial{f}}{\partial{\T{G}}}$
		
		\STATE $\tilde{x}_\alpha \leftarrow \T{G}\times\{\mathbf{u}\}_{\alpha}$
		
		\FOR{$n = 1,\cdots,N$}
		
		\STATE $\frac{\partial{f}}{\partial{\mathbf{u}_{i}^{(n)}}}
		\leftarrow
		-\big(x_{\alpha}-\tilde{x}_\alpha\big)\big{\lbrack}(\T{G}\times_{-n}\{\mathbf{u}\}_{\alpha})_{(n)}\big{\rbrack}^{\mathsf{T}}+\frac{\lambda_{reg}}{|\Omega^{n,i_n}_{\T{X}}|}\mathbf{u}^{(n)}_{i_n}$
		
		\ENDFOR
		
		\STATE $\frac{\partial{f}}{\partial{\T{G}}}
		\leftarrow
		-\big(x_{\alpha}-\tilde{x}_\alpha\big)\times\{\mathbf{u}\}^{\mathsf{T}}_{\alpha}+\frac{\lambda_{reg}}{|\Omega_{\T{X}}|}\T{G}$
		
		\RETURN $\frac{\partial{f}}{\partial{\mathbf{u}_{i_1}^{(1)}}}$,$\frac{\partial{f}}{\partial{\mathbf{u}_{i_2}^{(2)}}}$,$\cdots$,$\frac{\partial{f}}{\partial{\mathbf{u}_{i_N}^{(N)}}}$,$\frac{\partial{f}}{\partial{\T{G}}}$
		
	\end{algorithmic}
\end{algorithm}

There is no extra time required for calculating $\T{S}$ because $\T{S}$ is generated while calculating $\tilde{x}_\alpha$. Lemma \ref{lemma: Intermediate Sum} shows that $\tilde{x}_\alpha$ is calculated by summing all entries of $\T{S}$.

\begin{lemma} \label{lemma: Intermediate Sum}
	For a given tensor index $\alpha$, estimated tensor entry $\tilde{x}_\alpha=\sum_{j_1=1}^{J_1}\sum_{j_2=1}^{J_2}\cdots\sum_{j_N=1}^{J_N}s_{j_1j_2\cdots j_N}$.
\end{lemma}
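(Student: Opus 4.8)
The plan is to simply unwind the definitions; no real machinery is needed. Recall the element-wise form of the Tucker model, Equation \eqref{eqn:hosvd elementwise}, which for the fixed index $\alpha=(i_1i_2\cdots i_N)$ reads
\[
\tilde{x}_{\alpha}=\sum_{j_1=1}^{J_1}\sum_{j_2=1}^{J_2}\cdots\sum_{j_N=1}^{J_N} g_{j_1j_2\cdots j_N}\,u_{i_1j_1}^{(1)}u_{i_2j_2}^{(2)}\cdots u_{i_Nj_N}^{(N)}.
\]
On the other hand, by the definition of the intermediate data $\T{S}$ (for this same $\alpha$), its $(j_1j_2\cdots j_N)$-th entry is precisely $s_{j_1j_2\cdots j_N}=g_{j_1j_2\cdots j_N}\,u_{i_1j_1}^{(1)}u_{i_2j_2}^{(2)}\cdots u_{i_Nj_N}^{(N)}$, i.e., exactly the summand appearing in the display above. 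Substituting this identity term by term into the displayed equation yields $\tilde{x}_{\alpha}=\sum_{j_1=1}^{J_1}\sum_{j_2=1}^{J_2}\cdots\sum_{j_N=1}^{J_N} s_{j_1j_2\cdots j_N}$, which is the claim.

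Concretely, the steps are: (i) write down Equation \eqref{eqn:hosvd elementwise} specialized to the chosen tensor index $\alpha$; (ii) recognize each scalar product $g_{j_1\cdots j_N}u_{i_1j_1}^{(1)}\cdots u_{i_Nj_N}^{(N)}$ as the defined entry $s_{j_1\cdots j_N}$; (iii) conclude the equality of the two multi-sums. As a remark, this argument also makes transparent why maintaining $\T{S}$ incurs no extra cost: the natural evaluation of $\tilde{x}_{\alpha}=\T{G}\times\{\mathbf{u}\}_{\alpha}$ already forms each product $s_{j_1\cdots j_N}$ before accumulating it, so the array $\T{S}$ is a byproduct of computing $\tilde{x}_{\alpha}$.

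The only thing to be careful about — and it is the closest thing to an ``obstacle'' here — is the index bookkeeping: the fixed tensor index $\alpha=(i_1\cdots i_N)$ selects which rows $\mathbf{u}_{i_n}^{(n)}$ of the factor matrices enter the definition of $\T{S}$, while the free core indices $(j_1,\dots,j_N)$ are exactly the summation variables that range over $[J_1]\times\cdots\times[J_N]$. Once this correspondence is stated explicitly, the lemma follows by direct substitution.
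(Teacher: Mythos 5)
Your proof is correct and matches the paper's argument, which likewise invokes Equation \eqref{eqn:hosvd elementwise} and notes the claim is immediate; you simply spell out the term-by-term identification of $s_{j_1\cdots j_N}$ with the Tucker summand, which the paper leaves implicit. No issues.
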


\begin{proof}
	The proof is straightforward by Equation \eqref{eqn:hosvd elementwise}.
\end{proof}

We use $\T{S}$ to calculate gradients efficiently.

\begin{definition}\label{def:collapse}
	\textbf{(Collapse)}
	The \textit{Collapse} operation of the intermediate tensor $\T{S}$ on the $n$-th mode outputs a row vector defined by\\
	\centerline{$Collapse(\T{S},n)=\big[\sum_{\forall\delta\in\Omega_{\T{S}}^{n,1}}s_\delta,\sum_{\forall\delta\in\Omega_{\T{S}}^{n,2}}s_\delta, \cdots ,\sum_{\forall\delta\in\Omega_{\T{S}}^{n,J_n}}s_\delta\big]$}
\end{definition}

\textit{Collapse} operation aggregates the elements of intermediate tensor $\T{S}$ with respect to a fixed mode.
We re-express the calculation of gradients for tensor factors in Equations \eqref{eqn:gradient} in an efficient manner.

\begin{lemma} \label{lemma: new gradient calculation}
	\textbf{(Efficient Gradient Calculation)} Followings are equivalent calculations of tensor factors gradients as Equations \eqref{eqn:gradient}.
	\begin{equation}\label{eqn:estimated entry}
	\tilde{x}_\alpha\leftarrow\sum_{j_1=1}^{J_1}\sum_{j_2=1}^{J_2}\cdots\sum_{j_N=1}^{J_N}s_{j_1j_2\cdots j_N}
	\end{equation}
	\begin{equation}\label{eqn:new factor gradient}
	\frac{\partial{f}}{\partial{\mathbf{u}_{i_n}^{(n)}}}\leftarrow-(x_\alpha-\tilde{x}_\alpha)\cdot Collapse(\T{S},n)\oslash \mathbf{u}_{i_n}^{(n)}+\frac{\lambda_{reg}}{|\Omega_{\T{X}}^{n,i_n}|}\mathbf{u}_{i_n}^{(n)}
	\end{equation}
	\begin{equation}\label{eqn:new core gradient}
	\frac{\partial{f}}{\partial{\T{G}}}\leftarrow-(x_\alpha-\tilde{x}_\alpha)\cdot{\T{S}}\oslash{\T{G}}+\lambda_{reg}\T{G}
	\end{equation}
	where $\alpha=(i_1i_2\cdots i_N)$ and $\oslash$ is element-wise division.
\end{lemma}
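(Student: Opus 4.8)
The plan is to verify the three claimed identities in Lemma~\ref{lemma: new gradient calculation} by unwinding the definition of the intermediate tensor $\T{S}$ and matching terms with the gradient expressions in Equation~\eqref{eqn:gradient}. Equation~\eqref{eqn:estimated entry} is immediate: it is precisely the statement of Lemma~\ref{lemma: Intermediate Sum}, so I would simply cite that lemma. For the remaining two identities the key observation is that, since $s_{j_1\cdots j_N}=g_{j_1\cdots j_N}u^{(1)}_{i_1j_1}\cdots u^{(N)}_{i_Nj_N}$, dividing an entry of $\T{S}$ by the corresponding factor entry or core entry removes exactly one multiplicand, which is what the mode-$n$ partial products in Equation~\eqref{eqn:gradient} require.

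Concretely, for Equation~\eqref{eqn:new factor gradient} I would start from the original gradient, whose non-regularization part is $-(x_\alpha-\tilde x_\alpha)\bigl[(\T{G}\times_{-n}\{\mathbf{u}\}_\alpha)_{(n)}\bigr]^{\mathsf T}$, and show that the $j_n$-th component of the vector $(\T{G}\times_{-n}\{\mathbf{u}\}_\alpha)_{(n)}$ equals $\sum g_{j_1\cdots j_N}\prod_{m\neq n}u^{(m)}_{i_mj_m}$, where the sum runs over all $j_m$ with $m\neq n$. Then I multiply and divide by $u^{(n)}_{i_nj_n}$ inside the sum to obtain $\bigl(\sum_{\delta\in\Omega_{\T{S}}^{n,j_n}} s_\delta\bigr)/u^{(n)}_{i_nj_n}$, which by Definition~\ref{def:collapse} is exactly the $j_n$-th entry of $Collapse(\T{S},n)\oslash\mathbf{u}^{(n)}_{i_n}$. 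The regularization term is unchanged, so the two expressions agree coordinate-wise. The argument for Equation~\eqref{eqn:new core gradient} is analogous but simpler: the $(j_1\cdots j_N)$-th entry of $-(x_\alpha-\tilde x_\alpha)\times\{\mathbf{u}\}^{\mathsf T}_\alpha$ is $-(x_\alpha-\tilde x_\alpha)\,u^{(1)}_{i_1j_1}\cdots u^{(N)}_{i_Nj_N}$, and dividing $s_{j_1\cdots j_N}$ by $g_{j_1\cdots j_N}$ leaves precisely the product $u^{(1)}_{i_1j_1}\cdots u^{(N)}_{i_Nj_N}$, matching the claim.

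The only genuine subtlety I anticipate is bookkeeping around the matricization operator $(\cdot)_{(n)}$ and the transpose: one must check that $\T{G}\times_{-n}\{\mathbf{u}\}_\alpha$ is genuinely a length-$J_n$ object indexed by $j_n$ and that ``mode-$n$ matricization followed by transpose'' yields the column vector whose components are the sums described above. This is a direct consequence of Equation~\eqref{eqn:n mode except row set product}, applied with the core tensor $\T{G}$ in place of $\T{X}$ and the factor rows $\{\mathbf{u}\}_\alpha$ in place of $\{\mathbf{a}\}$, so there is no real mathematical content, only index-tracking. A secondary caveat worth a one-line remark is that the element-wise division $\oslash$ is well-defined only when the factor and core entries are nonzero; in practice this holds almost surely under random initialization and the identities should be read as holding wherever the divisions make sense (or the offending entries can be excluded since their contribution to $\T{S}$ is zero anyway). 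I would present the factor-gradient case in full and dispatch the core-gradient case with ``by the same argument.''
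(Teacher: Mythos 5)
Your proposal follows essentially the same route as the paper's own proof: Equation~\eqref{eqn:estimated entry} is dispatched by citing Lemma~\ref{lemma: Intermediate Sum}, and the other two identities are verified coordinate-wise by expanding via Equation~\eqref{eqn:n mode except row set product} and multiplying-and-dividing by the missing factor entry (resp.\ the core entry) to recover $Collapse(\T{S},n)\oslash\mathbf{u}^{(n)}_{i_n}$ and $\T{S}\oslash\T{G}$. Your added remark about $\oslash$ being well-defined only for nonzero entries is a caveat the paper silently omits, but it does not change the argument.
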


\begin{proof}
	In Lemma \ref{lemma: Intermediate Sum}, Equation \eqref{eqn:estimated entry} is proved. To prove the equivalence of Equation \eqref{eqn:new factor gradient} and the first equation of Equations \eqref{eqn:gradient}, it suffices to show $[(\T{G}\times_{-n}\{\mathbf{u}\}_{\alpha})_{(n)}]^{\mathsf{T}}=Collapse(\T{S},n)\oslash \mathbf{u}_{i_n}^{(n)}$
	where $\alpha=(i_1\cdots i_N)\in\Omega_{\T{X}}$
	and $\delta=(j_1\cdots j_N)\in\Omega_{\T{G}}^{n,k}$. We use Equation \eqref{eqn:n mode except row set product} for the proof.
	\begin{equation*}
	[(\T{G}\times_{-n}\{\mathbf{u}\}_{\alpha})_{(n)}]^{\mathsf{T}}_{k}=\sum_{\forall\delta\in\Omega_{\T{G}}^{n,k}}{g_{\delta}u^{(1)}_{i_1j_1}\cdots u^{(n-1)}_{i_{n-1}j_{n-1}}u^{(n+1)}_{i_{n+1}j_{n+1}}\cdots u^{(N)}_{i_{N}j_{N}}}
	\end{equation*}
	\begin{equation*}
	=\sum_{\forall\delta\in\Omega_{\T{G}}^{n,k}}{g_{\delta}u^{(1)}_{i_1j_1}\cdots u^{(n-1)}_{i_{n-1}j_{n-1}}u^{(n)}_{i_nk}u^{(n+1)}_{i_{n+1}j_{n+1}}\cdots u^{(N)}_{i_{N}j_{N}}}\big/u^{(n)}_{i_nk}
	\end{equation*}
	\begin{equation*}
	=\sum_{\forall\delta\in\Omega_{\T{S}}^{n,k}}s_\delta/u^{(n)}_{i_nk}
	=\frac{[Collapse(\T{S},n)]_k}{u^{(n)}_{i_nk}}=[Collapse(\T{S},n)\oslash \mathbf{u}_{i_n}^{(n)}]_k
	\end{equation*}
	Next, to show the equivalence of Equation \eqref{eqn:new core gradient} and the second equation of Equations \eqref{eqn:gradient}, it suffices to show $1\times\{\mathbf{u}\}_{\alpha}^{\mathsf{T}}=\T{S}\oslash\T{G}$.
	\begin{equation*}
	[1\times\{\mathbf{u}\}_{\alpha}^{\mathsf{T}}]_{\gamma=(l_1l_2\cdots l_N)}=u^{(1)}_{i_1l_1}u^{(2)}_{i_2l_2}\cdots u^{(N)}_{i_Nl_N}
	\end{equation*}
	\begin{equation*}
	=g_{\gamma}u^{(1)}_{i_1l_1}\cdots u^{(N)}_{i_Nl_N}/g_{\gamma}
	=s_{\gamma}/g_{\gamma}
	=[\T{S}\oslash\T{G}]_{\gamma}
	\end{equation*}
\end{proof}

\begin{algorithm} [t]
	\small
	\caption{\textit{compute\_gradient\_opt}($\alpha$,$x_\alpha$,$\T{G}$)} \label{alg:compute_gradient_new}
	\begin{algorithmic}[1]
		\REQUIRE Tensor entry $x_\alpha$, $\alpha=(i_1\cdots i_N)$$\in\Omega_{\T{X}}$, core tensor $\T{G}$
		
		\ENSURE Gradients $\frac{\partial{f}}{\partial{\mathbf{u}_{i_1}^{(1)}}}$,$\frac{\partial{f}}{\partial{\mathbf{u}_{i_2}^{(2)}}}$,$\cdots$,$\frac{\partial{f}}{\partial{\mathbf{u}_{i_N}^{(N)}}}$,$\frac{\partial{f}}{\partial{\T{G}}}$
		
		\STATE $\tilde{x}_\alpha\leftarrow0$
		
		\FOR {$\forall(j_1j_2\cdots j_N)\in\Omega_{\T{G}}$}
		
		\STATE $s_{j_1j_2\dots j_N}\leftarrow g_{j_1j_2\dots j_N}u_{i_1j_1}^{(1)}u_{i_2j_2}^{(2)}\cdots u_{i_Nj_N}^{(N)}$
		
		\STATE $\tilde{x}_\alpha\leftarrow\tilde{x}_\alpha+s_{j_1j_2\dots j_N}$
		
		\ENDFOR
		
		\FOR{$n = 1,\dots,N$}
		
		\STATE $\frac{\partial{f}}{\partial{\mathbf{u}_{i_n}^{(n)}}}\leftarrow-(x_\alpha-\tilde{x}_\alpha)\cdot Collapse(\T{S},n)\oslash \mathbf{u_{i_n}^{(n)}}+\frac{\lambda_{reg}}{|\Omega_{\T{X}}^{n,i_n}|}\mathbf{u_{i_n}^{(n)}}$
		
		\ENDFOR
		
		\STATE $\frac{\partial{f}}{\partial{\T{G}}}\leftarrow-(x_\alpha-\tilde{x}_\alpha)\cdot{\T{S}}\oslash{\T{G}}+\lambda_{reg}\T{G}$
		
		\RETURN $\frac{\partial{f}}{\partial{\mathbf{u}_{i_1}^{(1)}}}$,$\frac{\partial{f}}{\partial{\mathbf{u}_{i_2}^{(2)}}}$,$\dots$,$\frac{\partial{f}}{\partial{\mathbf{u}_{i_N}^{(N)}}}$,$\frac{\partial{f}}{\partial{\T{G}}}$
		
	\end{algorithmic}
\end{algorithm}

\vspace*{-0.3cm}
\method-opt replaces \textit{compute\_gradient} (Algorithm \ref{alg:compute_gradient}) of \method-naive with \textit{compute\_gradient\_opt} (Algorithm \ref{alg:compute_gradient_new}), a time-improved alternative using Lemma \ref{lemma: new gradient calculation}.
We prove that the new calculation scheme is faster than the previous one.
\vspace*{-0.2cm}
\begin{lemma} \label{lemma:time comparison of new update}
	\textit{compute\_gradient\_opt} is faster than \textit{compute\_gradient}. The time complexity of \textit{compute\_gradient} is $\mathcal{O}(N^2J^N)$ and the time complexity of \textit{compute\_gradient\_opt} is $\mathcal{O}(NJ^N)$ where $J_1=J_2=\cdots=J_N=J$.
\end{lemma}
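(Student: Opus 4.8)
The plan is to bound the running time of \textit{compute\_gradient} (Algorithm~\ref{alg:compute_gradient}) and \textit{compute\_gradient\_opt} (Algorithm~\ref{alg:compute_gradient_new}) by a line-by-line count of scalar operations, treating $\T{G}$ (hence $\T{S}$) as a dense array of $J^N$ entries so that $|\Omega_{\T{G}}|=J^N$, and charging unit cost per scalar multiply/add. All the work in either routine lives inside a constant number of loops whose bodies touch either the $J^N$ entries of the core tensor or the $J$ entries of a factor row, so the totals follow from counting how many times such entries are visited and with how long a product.

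For \textit{compute\_gradient}: line~1 evaluates $\tilde{x}_\alpha=\T{G}\times\{\mathbf{u}\}_\alpha$; reading this off directly from the $N$-fold sum in Equation~\eqref{eqn:hosvd elementwise} touches each of the $J^N$ core entries once, each with an $\mathcal{O}(N)$-length product, so $\mathcal{O}(NJ^N)$ (and at most this if one instead chains mode products). The crux is the loop of lines~2--4: for each of the $N$ modes it builds $(\T{G}\times_{-n}\{\mathbf{u}\}_\alpha)_{(n)}$ \emph{from scratch} via Equation~\eqref{eqn:n mode except row set product}, i.e.\ $J$ output entries, each an independent sum over $J^{N-1}$ core entries carrying an $\mathcal{O}(N)$-length product; that is $\mathcal{O}(NJ^N)$ per mode, with nothing shared across modes, hence $\mathcal{O}(N^2J^N)$ over the $N$ iterations. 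Line~5 forms the $J^N$-entry tensor $1\times\{\mathbf{u}\}_\alpha^{\mathsf{T}}$ of $\mathcal{O}(N)$-length products in $\mathcal{O}(NJ^N)$, then does $\mathcal{O}(J^N)$ element-wise work, and the regularization terms cost $\mathcal{O}(J^N)$. Summing, the lines~2--4 loop dominates, giving $\mathcal{O}(N^2J^N)$.

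For \textit{compute\_gradient\_opt}: lines~2--5 build $\T{S}$ and $\tilde{x}_\alpha$ in one pass, each of the $J^N$ entries $s_{j_1\cdots j_N}$ being an $\mathcal{O}(N)$-length product contributing $\mathcal{O}(1)$ to the running sum, so $\mathcal{O}(NJ^N)$. Lines~6--8 call $Collapse(\T{S},n)$ for each of the $N$ modes; by Definition~\ref{def:collapse} each call is a single linear scan of the $J^N$ entries of $\T{S}$ (each entry falls in exactly one of the $J_n$ buckets), i.e.\ $\mathcal{O}(J^N)$, plus $\mathcal{O}(J)$ for the $\oslash\,\mathbf{u}_{i_n}^{(n)}$ and regularization, hence $\mathcal{O}(NJ^N)$ overall — the saving is precisely that the one precomputed $\T{S}$ is reused for every mode, replacing the $N$ independent $\mathcal{O}(NJ^N)$ recomputations of \textit{compute\_gradient}. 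Line~9 is $\mathcal{O}(J^N)$ element-wise work. Thus the total is $\mathcal{O}(NJ^N)$; since these counts are actually attained, and by Lemma~\ref{lemma: new gradient calculation} the two routines return the same gradients, \textit{compute\_gradient\_opt} is faster than \textit{compute\_gradient} (strictly so for $N>1$).

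The main obstacle is pinning down the cost model for the two ``$\times_{-n}$'' computations: one must argue that in Algorithm~\ref{alg:compute_gradient} the quantity $(\T{G}\times_{-n}\{\mathbf{u}\}_\alpha)_{(n)}$ is genuinely recomputed afresh for every $n$ with no reuse of partial products — so the $N$ iterations really do multiply the per-mode $\mathcal{O}(NJ^N)$ cost into $\mathcal{O}(N^2J^N)$ — whereas in Algorithm~\ref{alg:compute_gradient_new} the reuse of $\T{S}$ collapses each mode's contribution to a cheap $\mathcal{O}(J^N)$ scan via $Collapse$. The remaining per-line counts are routine.
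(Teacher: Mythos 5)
Your proposal is correct and follows essentially the same line-by-line operation count as the paper: the $N$ from-scratch $\mathcal{O}(NJ^N)$ evaluations of $\T{G}\times_{-n}\{\mathbf{u}\}_\alpha$ give $\mathcal{O}(N^2J^N)$ for \textit{compute\_gradient}, while the one-pass construction of $\T{S}$ plus $N$ cheap $\mathcal{O}(J^N)$ \textit{Collapse} scans gives $\mathcal{O}(NJ^N)$ for \textit{compute\_gradient\_opt}. Your added remarks on the cost model and on equivalence via the efficient-gradient lemma only make the argument more explicit than the paper's.
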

\vspace*{-0.2cm}
\begin{proof}
	We assume that $I_1=I_2=\cdots=I_N=I$ for brevity.	First, we calculate the time complexity of \textit{compute\_gradient} (Algorithm \ref{alg:compute_gradient}).
	Given a tensor index $\alpha$, computing $\tilde{x}_{\alpha}$ (line 1 of Algorithm \ref{alg:compute_gradient}) takes $\mathcal{O}(NJ^N)$.
	Computing ($\T{G}\times_{-n}\{\mathbf{u}\}_{\alpha}$) (line 3) takes $\mathcal{O}(NJ^N)$. Thus, aggregate time for calculating the row gradient for all modes (lines 2-4) takes $\mathcal{O}(N^2J^N)$. Calculating $(x_{\alpha}-\tilde{x}_{\alpha})\times\{\mathbf{u}\}_{\alpha}^{\mathsf{T}}$ (line 5) takes $\mathcal{O}(NJ^N)$. In sum, \textit{compute\_gradient} takes $\mathcal{O}(N^2J^N)$ time.
	Next, we calculate the time complexity of \textit{compute\_gradient\_opt} (Algorithm \ref{alg:compute_gradient_new}). Computing an entry of intermediate data $\T{S}$ (line 3 of Algorithm \ref{alg:compute_gradient_new}) takes $\mathcal{O}(N)$. Aggregate time for getting $\T{S}$ (lines 2-5) is $\mathcal{O}(NJ^N)$ because $|\Omega_{\T{G}}|=\mathcal{O}(J^N)$.
	Calculating row gradient for all modes (lines 6-8) takes $\mathcal{O}(NJ^N)$ because $Collapse$ operation takes $\mathcal{O}(J^N)$.
	Calculating gradient for core tensor (line 9) takes $\mathcal{O}(J^N)$. In sum, \textit{compute\_gradient\_opt} takes $\mathcal{O}(NJ^N)$ time.
\end{proof}
\vspace*{-0.2cm}

\begin{table}[h]
	\small
	\setlength{\tabcolsep}{1pt}
	\caption{Comparison of time complexity (per iteration) and memory usage of our proposed $\mathbf{S^3}$CMTF and other CMTF algorithms. $\mathbf{S^3}$CMTF-opt shows the lowest time complexity and $\mathbf{S^3}$CMTF-naive shows the lowest memory usage. For simplicity, we assume that all modes are of size $I$, of rank $J$, and an $I\times K$ matrix is coupled to one mode. $P$ is the number of parallel cores. (* indicates the lowest time or memory.)}
	\begin{center}
		{
			\begin{tabular}{L{2.3cm} L{3.7cm} L{2cm}}
				\toprule
				& \textbf{Time complexity} (per iter.) & \textbf{Memory usage} \\
				
				\midrule
				
				\textbf{$\mathbf{S^3}$CMTF-naive}& $\mathcal{O}(|\Omega_{\T{X}}|N^2J^{N}/P+|\Omega_{\mathbf{Y}}|J/P)$ & ${\mathcal{O}(PJ)}$*  \\
				
				\textbf{$\mathbf{S^3}$CMTF-opt}& ${\mathcal{O}(|\Omega_{\T{X}}|NJ^{N}/P+|\Omega_{\mathbf{Y}}|J/P)}$* & ${\mathcal{O}(PJ^N)}$  \\
				
				CMTF-Tucker-ALS &
				$\mathcal{O}(NI^{N-1}J^2+NI^2J^{N-1}+I^2K)$ &
				$\mathcal{O}(IJ^{N-1})$  \\
				
				CMTF-OPT & $\mathcal{O}(|\Omega_{\T{X}}|NJ+NI^{N-1}J+IJK)$ & $\mathcal{O}(I^{N-1}J+JK)$  \\
				
				\bottomrule
				
			\end{tabular}
		}
	\end{center}
	\label{tab:comparison}
\end{table}

\subsection{Analysis}\label{subsec:analysis}

We analyze the proposed method in terms of time complexity per iteration. For simplicity, we assume that $I_1=I_2=\cdots=I_N=I$, and $J_1=J_2=\cdots=J_N=J$.
Table \ref{tab:comparison} summarizes the time complexity (per iteration) and memory usage of \method and other methods.
Note that the memory usage refers to the auxiliary space for temporary variables used by a method.
\vspace*{-0.2cm}

\begin{lemma}\label{lemma:time complexity}
	The time complexity (per iteration) of \:\method-naive is $\mathcal{O}(|\Omega|N^2J^{N}/P+|\Omega_{\mathbf{Y}}|J/P)$ and the time complexity (per iteration) of \:\method-opt is \:$\mathcal{O}(|\Omega|NJ^{N}/P+|\Omega_{\mathbf{Y}}|J/P)$
	where $P$ denotes the number of parallel cores.
\end{lemma}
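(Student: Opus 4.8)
The plan is to add up, over a single sweep of the SGD loop in Algorithm~\ref{alg:s3cmtf} (lines 3--15), the work contributed by one tensor nonzero $\alpha\in\Omega_{\T{X}}$ and by one matrix nonzero $\beta\in\Omega_{\mathbf{Y}}$, and then argue that the $|\Omega_{\T{X}}|+|\Omega_{\mathbf{Y}}|$ loop bodies of one iteration are dispatched over the $P$ cores so that the wall-clock cost equals the total sequential cost divided by $P$. Throughout I treat $P,\lambda_m,\lambda_{reg}$ as $\mathcal{O}(1)$ and use the standing assumptions $I_1=\cdots=I_N=I$ and $J_1=\cdots=J_N=J$. I would not count lines 17--21, since the QR clean-up runs once after convergence and is not part of an SGD iteration; in the statement $|\Omega|$ denotes $|\Omega_{\T{X}}|$, the number of observed tensor entries.

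For a tensor index $\alpha$, the loop body runs \textit{compute\_gradient} (for \method-naive) or \textit{compute\_gradient\_opt} (for \method-opt), whose costs are $\mathcal{O}(N^2J^{N})$ and $\mathcal{O}(NJ^{N})$ respectively by Lemma~\ref{lemma:time comparison of new update}. The only other work charged to that index is the factor-row update on line~6, which is $N$ scalar-times-length-$J$ vector operations, i.e.\ $\mathcal{O}(NJ)$, and the core-tensor update on line~7, which is an element-wise update over all $|\Omega_{\T{G}}|=\mathcal{O}(J^{N})$ core entries, i.e.\ $\mathcal{O}(J^{N})$ (the factor $P$ multiplying the gradient is just an $\mathcal{O}(1)$ rescaling). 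Since $NJ\le N^2J^N$ and $J^N\le NJ^N$, both are dominated by the gradient computation, so one tensor nonzero contributes $\mathcal{O}(N^2J^{N})$ for \method-naive and $\mathcal{O}(NJ^{N})$ for \method-opt.

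For a matrix index $\beta=(j_1j_2)$, lines 10--13 first form $\tilde y_\beta=\mathbf{u}^{(c)}_{j_1}\mathbf{v}^{\mathsf{T}}_{j_2}$ (an inner product of length $J$) and then build and apply the two gradients $\partial f/\partial\mathbf{u}^{(c)}_{j_1}$ and $\partial f/\partial\mathbf{v}_{j_2}$, all of which are scalar-times-length-$J$ vector operations; the whole body is $\mathcal{O}(J)$. Summing the per-entry costs over all nonzeros and adding the $\mathcal{O}(1)$ learning-rate update of line~15, the sequential cost of one iteration is $\mathcal{O}(|\Omega_{\T{X}}|N^2J^{N}+|\Omega_{\mathbf{Y}}|J)$ for \method-naive and $\mathcal{O}(|\Omega_{\T{X}}|NJ^{N}+|\Omega_{\mathbf{Y}}|J)$ for \method-opt. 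Because the iterations are handed to $P$ cores with only lock-free bookkeeping (no locks, no barriers), and Lemma~\ref{lemma:convergence} shows this still converges, the per-iteration running time is the sequential cost divided by $P$, i.e.\ $\mathcal{O}(|\Omega|N^2J^{N}/P+|\Omega_{\mathbf{Y}}|J/P)$ and $\mathcal{O}(|\Omega|NJ^{N}/P+|\Omega_{\mathbf{Y}}|J/P)$ as claimed.

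The step I expect to need the most care is the single-core core-tensor update on line~7: since it is serialized, one must verify it does not destroy the $1/P$ speedup. The resolution is that the designated core still only processes its $\approx|\Omega_{\T{X}}|/P$ share of tensor entries, and for each such entry its extra $\mathcal{O}(J^{N})$ work on $\T{G}$ is of the same order as (or smaller than) the per-entry gradient cost it already pays, so the serialization inflates that core's load by at most a constant factor; hence the $1/P$ term survives. A secondary point worth stating explicitly is that ``per iteration'' means one sweep of the SGD loop, so the post-convergence orthogonalization (lines 17--21) is deliberately excluded from the bound.
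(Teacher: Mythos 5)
Your proof is correct and follows essentially the same route as the paper's: charge each tensor nonzero the cost of \textit{compute\_gradient} (resp.\ \textit{compute\_gradient\_opt}) from Lemma~\ref{lemma:time comparison of new update} plus the $\mathcal{O}(NJ)$ row update and $\mathcal{O}(J^N)$ core update, charge each matrix nonzero $\mathcal{O}(J)$, sum over $\Omega_{\T{X}}$ and $\Omega_{\mathbf{Y}}$, and divide by $P$. Your explicit check that the single-core update of $\T{G}$ on line~7 does not destroy the $1/P$ speedup (because the designated core only performs that update for its own $\approx|\Omega_{\T{X}}|/P$ share of entries, rescaled by $P$) is a point the paper's proof glosses over, but it does not change the argument.
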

\vspace*{-0.3cm}

\begin{proof}
	First, we check the time complexity of \method-naive (Algorithm \ref{alg:s3cmtf}). When a tensor index $\alpha$ is picked in the inner loop (line 4 of Algorithm \ref{alg:s3cmtf}), calculating gradients with respect to tensor factors (line 5) takes $\mathcal{O}(N^2J^N)$ as shown in Lemma \ref{lemma:time comparison of new update}.
	Updating factor rows (line 6) takes $\mathcal{O}(NJ)$, and updating core tensor (line 7) takes $\mathcal{O}(J^N)$. If a coupled matrix index $\beta$ is picked (line 9), calculating $\tilde{y}_{\beta}$ (line 10) takes $\mathcal{O}(J)$. Calculating and updating the factor rows corresponding to coupled matrix entry (lines 10-12) take $\mathcal{O}(J)$. All calculations except updating core tensor (line 7) are conducted in parallel.
	Finally, for all $\alpha\in\Omega_{\T{X}}$ and $\beta\in\Omega_{\mathbf{Y}}$, \method-naive takes $\mathcal{O}(|\Omega_{\T{X}}|N^2J^{N}/P+|\Omega_{\mathbf{Y}}|J/P)$ for one iteration. 
	\method-opt uses \textit{compute\_gradient\_opt} instead of \textit{compute\_gradient} in line 5 of Algorithm \ref{alg:s3cmtf}, whose time complexity is shown in Lemma \ref{lemma:time comparison of new update}.
	Overall running time per iteration for \method-opt is $\mathcal{O}(|\Omega_{\T{X}}|NJ^{N}/P+|\Omega_{\mathbf{Y}}|J/P)$.
\end{proof} 
\vspace*{-0.2cm}

\vspace*{-0.1cm}
\section{Experiments}
    \label{sec:experiments}
    In this and the next sections, we experimentally evaluate \method. Especially, we answer the following questions.

\noindent\textbf{Q1} : \textbf{Performance (Section \ref{subsec:performance})} How accurate and fast is \method compared to competitors?\\
\textbf{Q2} : \textbf{Scalability (Section \ref{subsec:scalability})} How do \method and other methods scale in terms of dimension, the number of observed entries, and the number of cores?\\
\textbf{Q3} : \textbf{Discovery (Section \ref{sec:case})} What are the discoveries of applying \method on real-world data?

\begin{table}[t!]
	\small
	\setlength{\tabcolsep}{1pt}
	\caption{Summary of the data used for experiments. K: thousand, and M: million. Data of density 1 are fully observed.}
	\begin{center}
		{
			\begin{tabular}{L{1.3cm} L{2.3cm} R{1.9cm} R{1.4cm} R{1.1cm}}
				\toprule
				\textbf{Name} & \textbf{Data} & \textbf{Dimensionality} & \textbf{\# entries} & \textbf{Density} \\
				
				\midrule
				
				MovieLens & User-Movie-Time & 71K-11K-157 & 10M & $\sim$$10^{-4}$  \\
				& Movie-Genre & 20 & 214K & 1\\
				Netflix & User-Movie-Time & 480K-18K-74 & 100M & $\sim$$10^{-4}$  \\
				& Movie-Yearmonth & 110 & 2M & 1\\
				Yelp & User-Business-Time & 1M-144K-149 & 4M & $\sim$$10^{-7}$  \\
				& User-User  & 1M & 7M & $\sim$$10^{-4}$ \\
				& Business-Category & 1K & 172M & 1\\
				& Business-City & 1K & 126M & 1\\
				Synthetic & 3-mode tensor & 1K$\sim$100M & 1K$\sim$100M & $10^{-20\sim-3}$ \\
				& Matrix & 1K$\sim$100M & 1K$\sim$100M & $10^{-11\sim-4}$ \\
				
				\bottomrule
				
			\end{tabular}
		}
	\end{center}
	\label{tab:data}
	\vspace*{-0.1cm}
\end{table}
\vspace{-3mm}

\subsection{Experimental Settings}
\textbf{Data.} Table \ref{tab:data} shows the data we used in our experiments.
We use three real-world datasets (MovieLens\footnote{http://grouplens.org/datasets/movielens/10m}, Netflix\footnote{http://www.netflixprize.com}, and Yelp\footnote{http://www.yelp.com/dataset\_challenge}) and generate synthetic data to evaluate \method.
Each entry of the real-world datasets represents a rating, which consists of (user, \lq item\rq, time; rating) where \lq item\rq~ indicates \lq movie\rq~ for MovieLens and Netflix, and \lq business\rq~ for Yelp.
We use (movie, genre) and (movie, year) as coupled matrices for MovieLens and Netflix, respectively.
We use (user, user) friendship matrix, (business, category) and (business, city) matrices for Yelp.
We generate 3-mode synthetic random tensors with dimensionality $I$ and corresponding coupled matrices.
We vary $I$ in the range of 1K$\sim$100M and the number of tensor entries in the range of 1K$\sim$100M.
We set the number of entries as $|\Omega_{\mathbf{Y}}|=\frac{1}{10}|\Omega_{\T{X}}|$ for synthetic coupled matrices.

\textbf{Measure.} We use test RMSE as the measure for tensor reconstruction error.
\begin{equation*}
\text{test RMSE}=\sqrt{\frac{1}{|\Omega_{test}|}\sum_{\forall\alpha\in\Omega_{test}}{(x_\alpha-\tilde{x}_{\alpha})^2}}
\end{equation*}

where $\Omega_{test}$ is the index set of the test tensor, $x_\alpha$ represents each test tensor entry, and $\tilde{x}_{\alpha}$ is the corresponding reconstructed value. 

\textbf{Methods.} We compare \method-naive and \method-opt with other single machine CMTF methods: CMTF-Tucker-ALS and CMTF-OPT (described in Section \ref{subsec:CMTF}). To examine multi-core performance, we run two versions of \method-opt: \method-opt1 (1 core), and \method-opt20 (20 cores).
We exclude distributed CMTF methods \cite{jeon2016scout,jeon2015haten2,beutel2014flexifact} because they are designed for Hadoop with multiple machines, and thus take too much time for single machine environment.
For example, \cite{oh2017s} reported that HaTen2 \cite{jeon2015haten2} takes 10,700s to decompose 4-way tensor with $I=10K$ and $|\Omega_{\T{X}}|=100K$, which is almost 7,000$\times$ slower than a single machine implementation of \method-opt.
%
For CMTF-Tucker-ALS, we use a MATLAB implementation based on Tucker-MET \cite{kolda2008scalable}.
For CMTF-OPT, we use MATLAB implementation of CMTF Toolbox 1.1\footnote{http://www.models.life.ku.dk/joda/CMTF\_Toolbox}.
We implement \method with C++, and OpenMP library for multi-core parallelization.
We note for fair comparison that a fully optimized C++ implementation might be faster than MATLAB implementation for loop-oriented algorithms; on the other hand, MATLAB potentially beats C++ on matrix and array calculations due to its high-degree optimization and auto multi-core calculations.
Regardless of the implementation environment, however, 
our main contributions still holds: \method scales to large data and a number of cores with high accuracy thanks to the careful use of intermediate data,
while competitors fail with out-of-memory error due to their excessive memory usage.

We conduct all experiments on a machine equipped with Intel Xeon E5-2630 v4 2.2GHz CPU and 256GB RAM.
All parameters are set to the best found values.
We mark out-of-memory (O.O.M.) error when the memory usage exceeds the limit and out-of-time (O.O.T.) error when the iteration time exceeds $10^4$ seconds.

\textbf{Parameters.} We set pre-defined parameters: tensor rank $J$, regularization factor $\lambda_{reg}$, $\lambda_{m}$, the initial learning rate $\eta_0$, and decay rate $\mu$. We set $\lambda_{reg}$ to 0.1, $\lambda_{m}=10$, and $\mu=0.1$ for all datasets. For rank and initial learning rate, MovieLens: $J=12, \eta=0.001$, Netflix: $J=11, \eta=0.001$, and Yelp: $J=10, \eta=0.0005$.


\subsection{Performance of $\mathbf{S^{3}CMTF}$} \label{subsec:performance}
\vspace*{-0.1cm}
We measure the performance of \method to answer Q1. As seen in Figure \ref{fig:Time Error} and \ref{fig:IterTime}, \method improves the test error of existing methods by 2.1$\sim$4.1$\times$ and decreases the running time for one iteration by 11$ \sim$43$\times$. The details of the experiments are as follows.

\textbf{Accuracy.}
We divide each data tensor into 80\%/20\% for train/test sets. 
The lower error for a same elapsed time implies the better accuracy and faster convergence.
Figure \ref{fig:Time Error} shows the changes of test RMSE of each method on three datasets over elapsed time which are the answers for Q1. \method achieves the lowest error compared to others for the same elapsed time.
For Netflix and Yelp, CMTF-Tucker-ALS shows O.O.M. error.
On MovieLens, the best error of competitors is 2.904 of CMTF-OPT. In the same elapsed time, \method-opt20 achieves $3.6\times$ lower error, 0.8037.
For Netflix, we improve the error of CMTF-OPT (3.764) by $4.1\times$ to achieve 0.9147.
In Yelp, the best error of CMTF-OPT is 2.663.
\method-opt20 shows the lowest error of 1.253 in a few tens of iterations, and after then, it falls into an over-fitting zone. \method-opt20 achieves $2.1\times$ less error than the best of CMTF-OPT.

\vspace*{-0.1cm}
\begin{figure} [!h]
	\begin{center}
		\includegraphics[width=0.5 \textwidth]{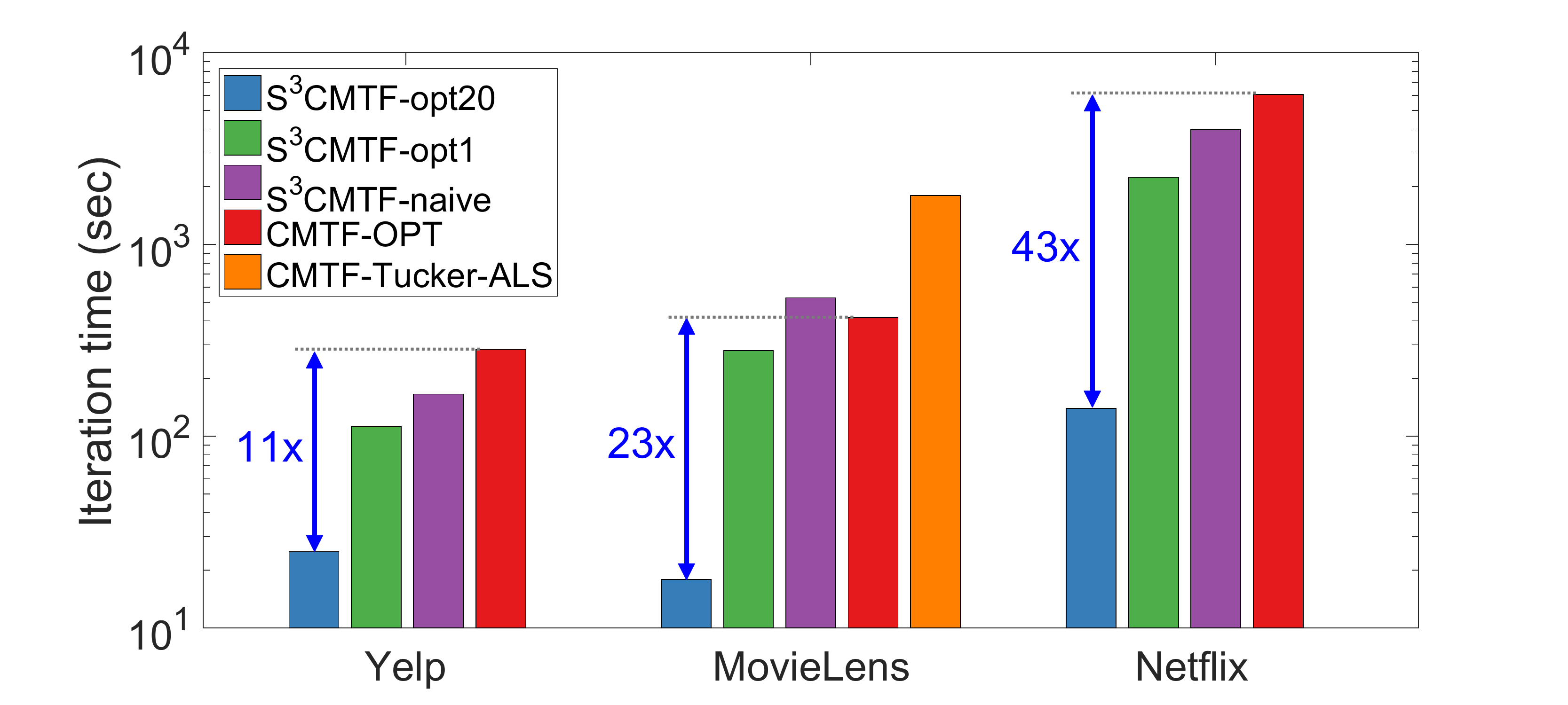}
	\end{center}
	\vspace*{-0.1cm}
	\caption{
		Running time of each method for one iteration.
		$\mathbf{S^{3}CMTF}$-opt20 is 11$\sim$43$\times$ faster than existing methods.}
	\label{fig:IterTime}
\end{figure}
\vspace*{-0.1cm}

\begin{figure*} [t]
	\includegraphics[width=1 \textwidth]{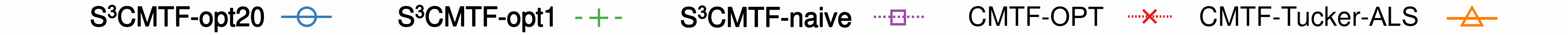}
	\subfloat[\textbf{Running time vs. Dimensionality}]
	{	\includegraphics[width=0.30 \textwidth]{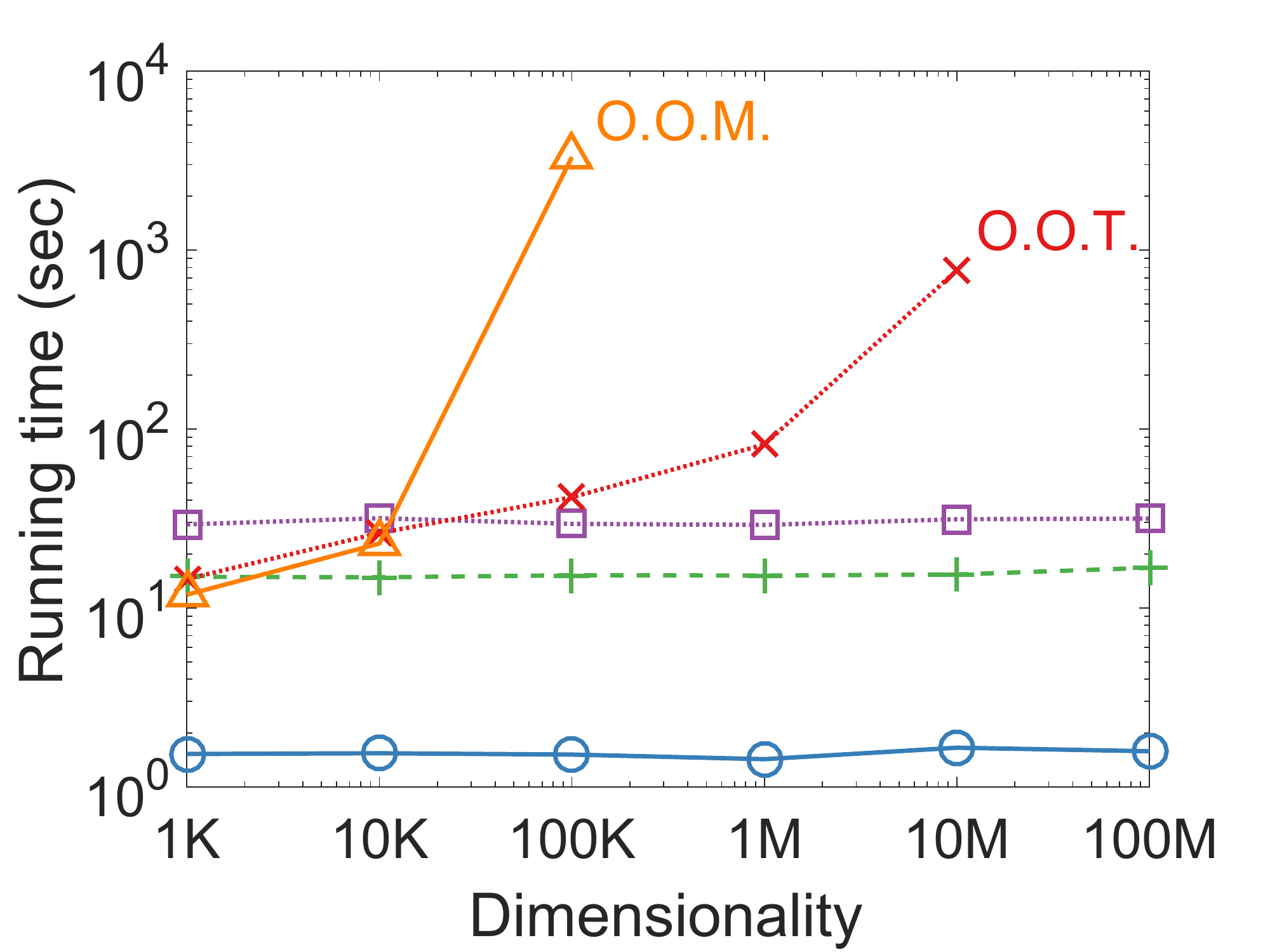}
	}
	\subfloat[\textbf{Running time vs. Number of entries}]
	{	\includegraphics[width=0.30 \textwidth]{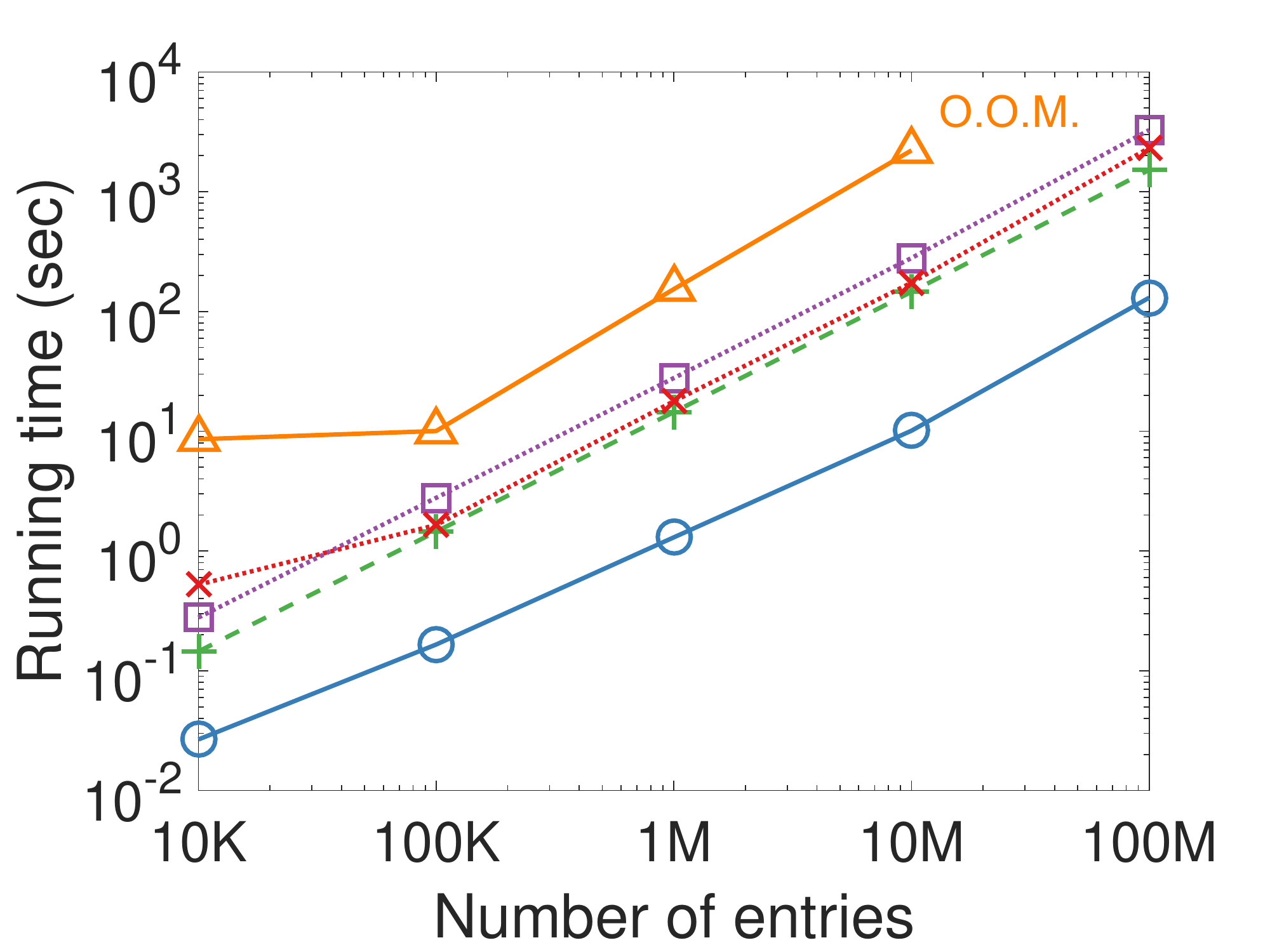}
	}
	\subfloat[\textbf{Parallel scalability}]
	{	\includegraphics[width=0.30 \textwidth]{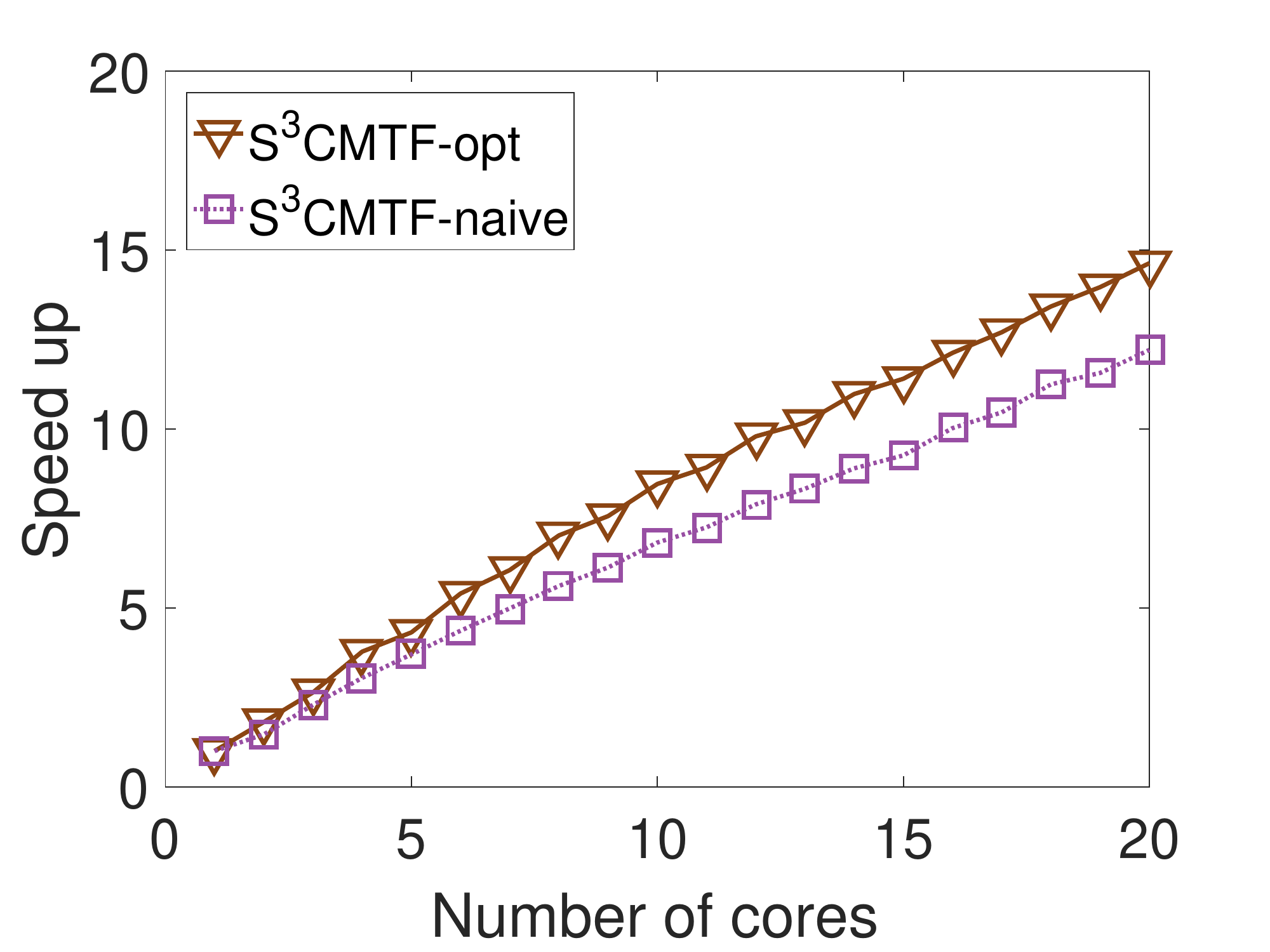}
	}
	\vspace*{-0.1cm}
	\caption{Comparison of scalability. (a) $\mathbf{S^{3}CMTF}$ takes constant time as dimensionality grows with the fixed number of entries.
		(b) $\mathbf{S^{3}CMTF}$ shows linear scalability as the number of entries increases.
		(c) $\mathbf{S^{3}CMTF}$-naive and $\mathbf{S^{3}CMTF}$-opt show linear \textit{Speed up} as the number of cores grows.
		O.O.M.: out of memory error, O.O.T.: out of time error.
	}
	\label{fig:Scalability}
	\vspace*{-0.2cm}
\end{figure*}

\textbf{Running time.}
We empirically show that \method achieves the best speed in terms of running time.
Figure \ref{fig:IterTime} shows the average running time of each method on the three data. \method-opt20 improves the running time of the best competitor by more than an order of magnitude for all datasets.
In Yelp, \method-opt20 takes 25s for an iteration which is $11 \times$ faster than 283s of CMTF-OPT.
In MovieLens, \method-opt20 takes 18s, $23 \times$ faster compared to 415s of CMTF-OPT.
For Netflix, \method-opt20 achieves $43 \times$ faster running time (140s) compared to that of CMTF-opt (6,100s). Note that CMTF-Tucker-ALS shows O.O.M. error for all data except for MovieLens.
Though \method-naive and \method-opt1 show comparable running times to that of CMTF-OPT for an iteration, they converge faster and are more accurate as shown in Figure \ref{fig:Time Error} since they capture inter-relations between factors with higher model capacities.

\begin{figure} [t]
	\vspace*{-0.3cm}
	\begin{center}
		\subfloat[\textbf{Convergence comparison}]
		{	\includegraphics[width=0.24 \textwidth]{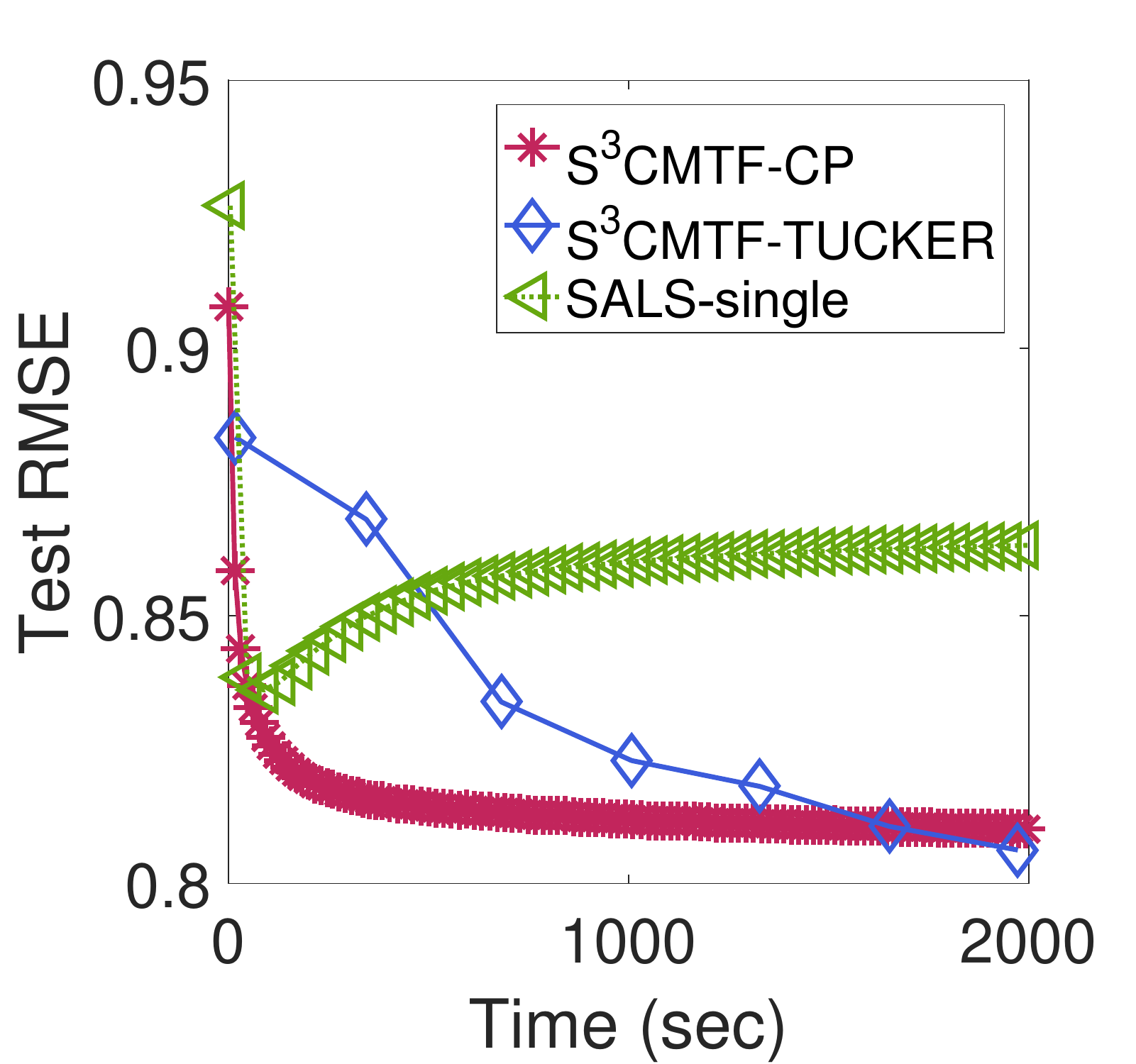}
		}
		\subfloat[\textbf{Time comparison}]
		{	\includegraphics[width=0.24 \textwidth]{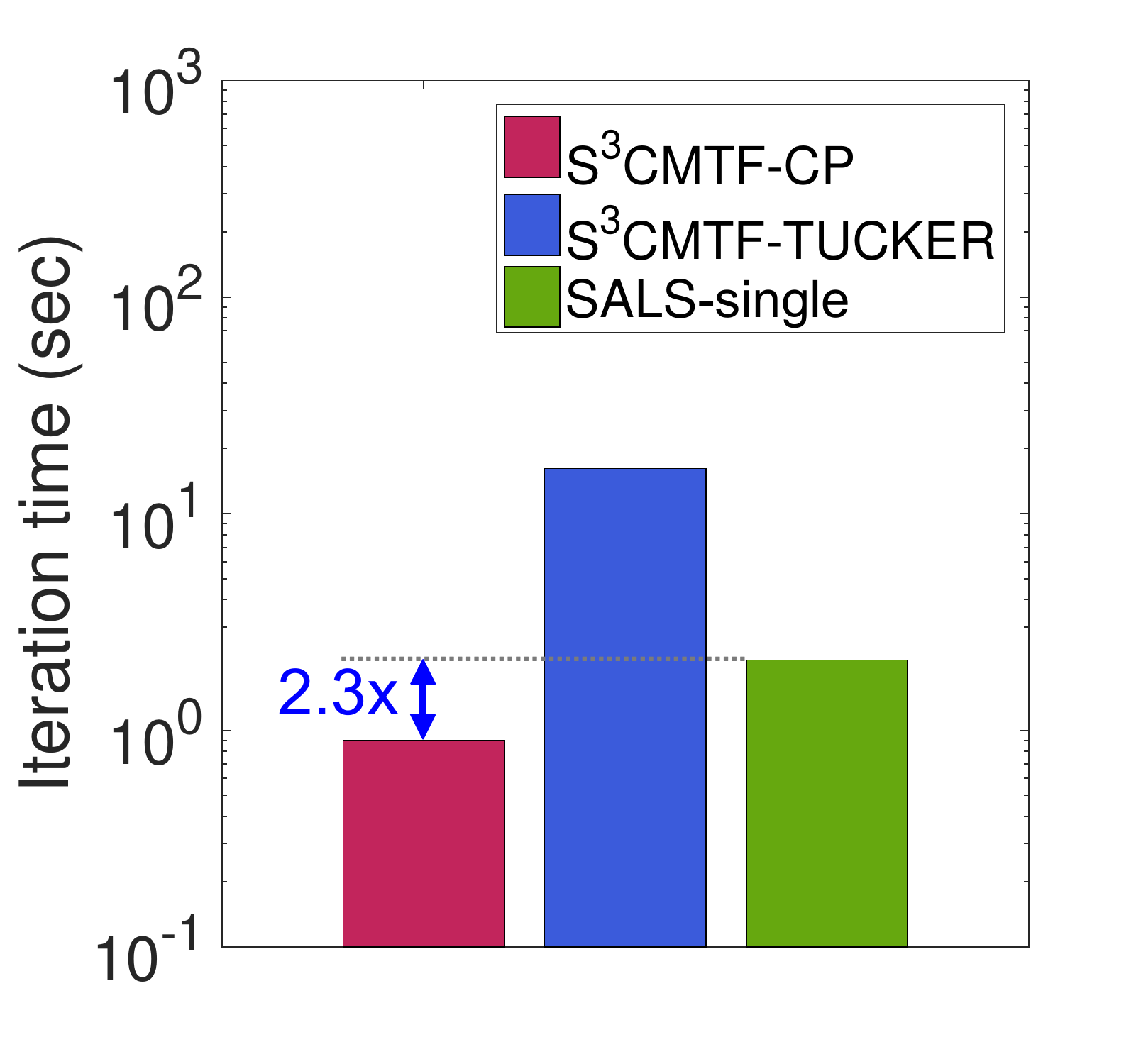}
		}
		\caption{Comparison with SALS-single. We compare two non-coupled version of $\mathbf{S^{3}CMTF}$, $\mathbf{S^{3}CMTF}\mathbf{-CP}$ and $\mathbf{S^{3}CMTF}\mathbf{-TUCKER}$ with the parallel CP decomposition method, SALS-single. For (a), we set 1 mark per 20 iterations for clarity.
			(a) $\mathbf{S^{3}CMTF}\mathbf{-CP}$ and $\mathbf{S^{3}CMTF}\mathbf{-TUCKER}$ converge to lower test RMSE than SALS-single while SALS-single overfits after few decades of iterations. Note that $\mathbf{S^{3}CMTF}\mathbf{-TUCKER}$ finds lower Test RMSE compared to the other methods.
			(b) $\mathbf{S^{3}CMTF}\mathbf{-CP}$ is 2.3$\times$ faster than SALS-single.
		}
		\label{fig:sals comp}
	\end{center}
\end{figure}
We compare our method with the multi-core version of SALS-single \cite{DBLP:journals/tkde/ShinSK17}, a CP decomposition algorithm, to demonstrate the high performance of \method compared to up-to-date decomposition algorithms. We implement CP version of our method, \method-CP, by setting $\T{G}$ to be hyper-diagonal.
Since CMTF is the extended problem of tensor decomposition, \method is used for tensor decomposition in a straightforward way by not coupling any matrices.
\method-TUCKER denote the non-coupled version of \method-opt. MovieLens tensor is used for decomposition.
Figure \ref{fig:sals comp} shows that \method is better than SALS-single in terms of both error and time.
\vspace*{-0.4cm}
\subsection{Scalability Analysis} \label{subsec:scalability}
We inspect scalability of our proposed method and others to answer Q2, in terms of two aspects: data scalability and parallel scalability. We use synthetic data of varying size for evaluation. As a result, we show the running time (for one iteration) of \method follows our theoretical analysis in Section \ref{subsec:analysis}.

\textbf{Data Scalability.}
The time complexity of CMTF-Tucker-ALS and CMTF-OPT have $\mathcal{O}(NI^{N-1}J^2)$ and $\mathcal{O}(NI^{N-1}J)$ as their dominant terms, respectively.
In contrast, \method exploits the sparsity of input data, and has the time complexity linear to the number of entries ($|\Omega_{\T{X}}|$, $|\Omega_{\mathbf{Y}}|$) and independent to the dimensionality ($I$) as shown in Lemma \ref{lemma:time complexity}.
Figures \ref{fig:Scalability}a and \ref{fig:Scalability}b show that the running time (for one iteration) of \method follows our theoretical analysis in Section \ref{subsec:analysis}.

First, we fix $|\Omega_{\T{X}}|$ to 1M and $|\Omega_{\mathbf{Y}}|$ to 100K, and vary dimensionality $I$ from 1K to 100M.
Figure \ref{fig:Scalability}a shows the running time (for one iteration) of all methods. Note that all our proposed methods achieve constant running time as dimensionality increases because they exploit the sparsity of data by updating factors related to only observed data entries.
However, CMTF-Tucker-ALS shows O.O.M. when $I\geq10M$, and CMTF-OPT presents O.O.T. when $I=100M$.
Next, we investigate the data scalability over the number of entries. We fix $I$ to 10K and raise $|\Omega_{\T{X}}|$ from 10K to 100M.
CMTF-Tucker-ALS shows O.O.M. when $|\Omega_{\T{X}}|=100M$, and CMTF-OPT shows near-linear scalability.
Focusing on the results of \method, all three versions of our approach show linear relation between running time and $|\Omega_{\T{X}}|$.

\textbf{Parallel Scalability.}
We conduct experiments to examine parallel scalability of \method on shared memory systems. For measurement, we define \textit{Speed up} as \textit{(Iteration time on 1 core)}/\textit{(Iteration time)}.
Figure \ref{fig:Scalability}c shows the linear \textit{Speed up} of \method-naive and \method-opt.
\method-opt earns higher \textit{Speed up} than \method-naive because it reduces reading accesses for core tensor by utilizing intermediate data.


\vspace*{-0.1cm}
\section{Discovery}
	\label{sec:case}
	
\begin{table}[tbp]
	\small
	\setlength{\tabcolsep}{1pt}
	\caption{Clustering results on business factor $\mathbf{U}^{(2)}$ found by $\mathbf{S^3}$CMTF. We found dominant spatial and categorical characteristics from each cluster. Businesses in a same cluster tend to be in adjacent cities and are included in similar categories.}
	\begin{center}
		{
			\begin{tabular}{L{1.0cm} L{2.1cm} L{4.9cm}}
				\toprule
				\textbf{Cluster} & \textbf{Location / \newline Category} & \textbf{Top-10 Businesses} \\
				\midrule
				\textbf{C1} & Las Vegas, US/ \newline Travel \& Entertainment & Nocturnal Tours, Eureka Casino, Happi Inn, Planet Hollywood Poker Room, Circus Midway Arcade, etc. \\
				\midrule
				\textbf{C2} & Arizona, US/ \newline Real estate \& Home services & ENMAR Hardwood Flooring, Sprinkler Dude LLC, Eklund Refrigeration, NR Quality Handyman, The Daniel Montez Real Estate Group, etc.  \\
				\midrule
				\textbf{C11} & Ontario, Canada/\newline Restaurants \& Deserts & Jyuban Ramen House, Tim Hortons, Captain John Donlands Fish and Chips, Cora's Breakfast \& Lunch, Pho Pad Thai, etc.
				\\
				\midrule
				\textbf{C17} & Ohio, US/\newline Food \& Drinks & ALDI, Pulp Juice and Smoothie Bar, One Barrel Brewing, Wok N Roll Food Truck, Gas Pump Coffee Company, etc.
				\\
				\bottomrule
				
			\end{tabular}
		}
	\end{center}
	\label{tab:cluster category}
	\vspace*{-0.2cm}
\end{table}
\vspace*{-0.1cm}
\begin{figure} [tb]
	\begin{center}
		\includegraphics[width=0.5 \textwidth]{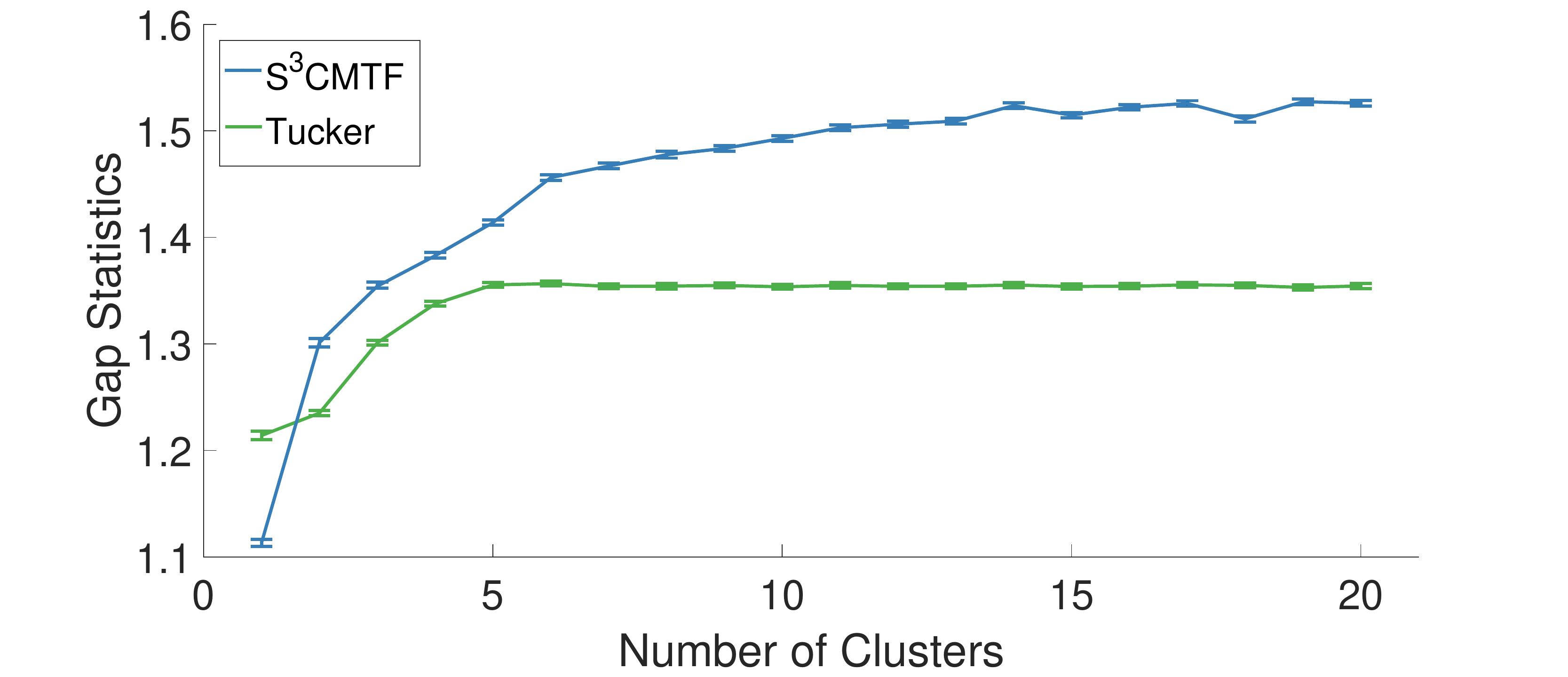}
	\end{center}
	\vspace*{-0.1cm}
	\caption{
		Gap statistics on $\mathbf{U}^{(2)}$ of $\mathbf{S^3}$CMTF and the Tucker decomposition for Yelp dataset. $\mathbf{S^3}$CMTF outperforms the naive Tucker decomposition for its clustering ability.} 
	\label{fig:gap statistics}
\end{figure}
In this section, we use \method for mining real-world data, Yelp, to answer the question Q3 in the beginning of Section \ref{sec:experiments}.
First, we demonstrate that \method has better discernment for business entities compared to the naive decomposition method by jointly capturing spatial and categorical prior knowledge.
Second, we show how \method is possibly applied to the real recommender systems. It is an open challenge to jointly capture the spatio-temporal context along with user preference data \cite{gao2013exploring}. We exemplify a personal recommendation for a specific user.
For discovery, we use the total Yelp data tensor along with coupled matrices as explained in Table~\ref{tab:data}. For better interpretability, we found non-negative factorization by applying projected gradient method \cite{lin2007projected}.
Orthogonality is not applied to keep non-negativity, and each column of factors is normalized.

\textbf{Cluster Discovery.}
First, we compare discernment by \method and the Tucker decomposition. We use the business factor $\mathbf{U}^{(2)}$. Figure \ref{fig:gap statistics} shows gap statistic values of clustering business entities with k-means clustering algorithm.
Higher gap statistic value means higher clustering ability \cite{tibshirani2001estimating}, thus \method outperforms the Tucker decomposition for entity clustering.

As the difference between \method and the Tucker decomposition is the existence of coupled matrices, the high performance of \method is attributed to the unified factorization using spatial and categorical data as prior knowledge. Table \ref{tab:cluster category} shows the found clusters of business entities. Note that each cluster represents a certain combination of spatial and categorical characteristics of business entities. 

\textbf{User-specific recommendation.}
Commercial recommendation is one of the most important applications of factorization models \cite{koren2009matrix,karatzoglou2010multiverse}. Here we illustrate how factor matrices are used for personalized recommendations with a real example. Figure \ref{fig:personal recommendation} shows the process for recommendation.
Below, we illustrate the process in detail.
\vspace*{-0.1cm}
\bit
\item An example user Tyler has a factor vector $\mathbf{u}$, namely user profile, which has been calculated by previous review histories.
\item We then calculate the personalized profile matrix $\T{R}=\T{G}\times_1\mathbf{u}(\in \mathbb{R}^{J_2 \times J_3})$. $\T{R}$ measures the amount of interaction of user profile with business and time factors.
\item Norm values of rows in $\T{R}$ indicate the influence of latent business concepts on Tyler. Dominant and weak concepts are found based on the calculated norm values. In the example, B4 is the strong, and B7 is the weak latent concept.
\item We inspect the corresponding columns of business factor matrix $\mathbf{U}^{(2)}$ and find relevant business entities with high values for the found concepts (B4 and B7).
\eit
\vspace*{-0.1cm}
We found both strong and weak entities by the above process. 
The strong and weak entities provide recommendation information by themselves in the sense that the probability of the user to like strong and weak entities are high and low, respectively, and they also give extended user preference information. For example, strong entities for Tyler are related to \lq spa \& health\rq~and located in neighborhood cities of Arizona, US. Weak entities are related to \lq grill \& restaurants\rq~and located in Toronto, Canada.
The captured user preference information makes commercial recommender systems more powerful with additional user-specific information such as address, current location, etc.
\begin{figure} [!t]
	\begin{center}
		\includegraphics[width=0.5 \textwidth]{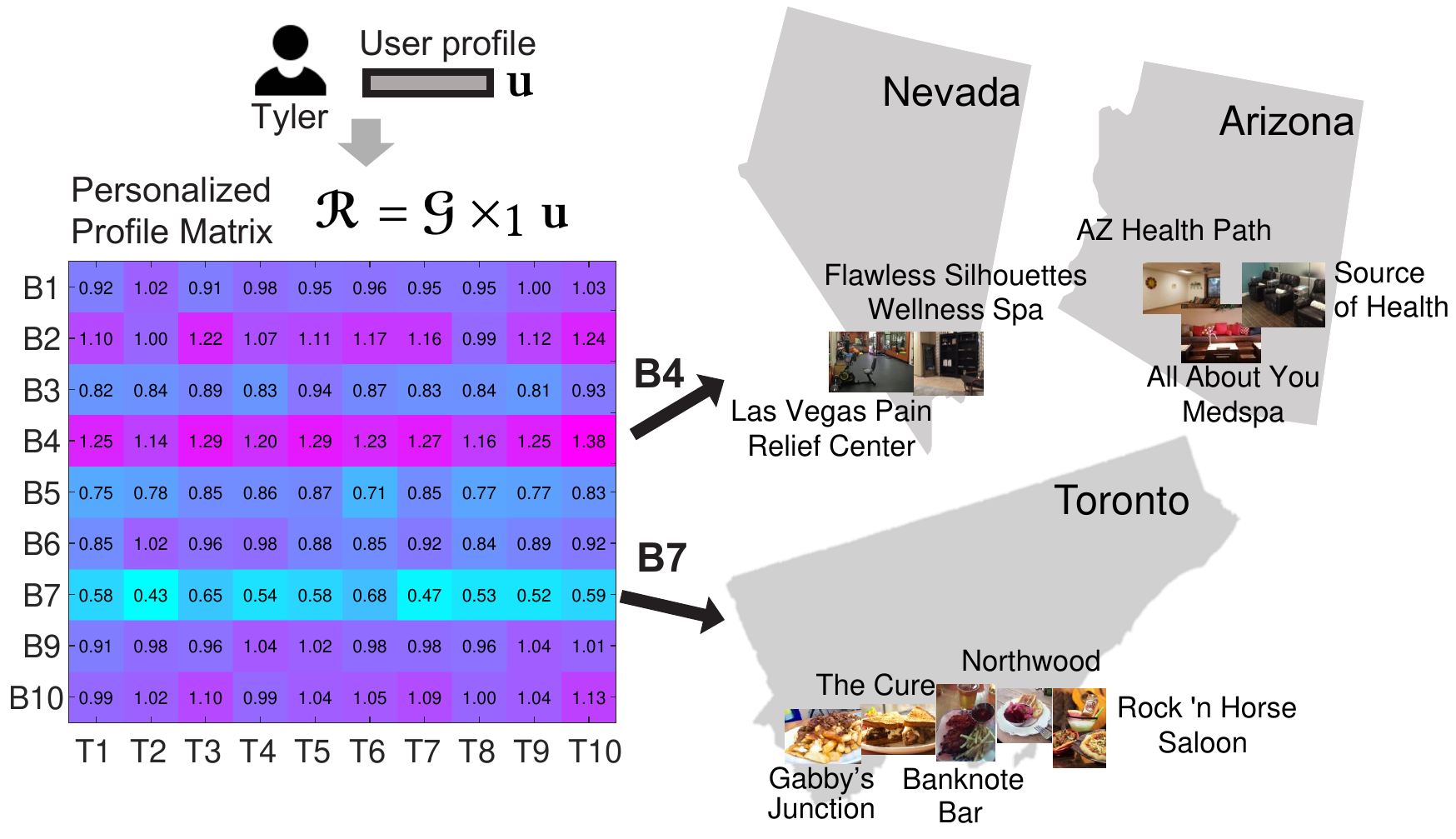}
	\end{center}
	\vspace*{-0.1cm}
	\caption{
		Example of personal recommendation process.}
	\label{fig:personal recommendation}
	\vspace*{-0.1cm}
\end{figure}
\vspace*{-0.2cm}

\section{Conclusion}
    \label{sec:conclusions}
    We propose \method, a fast, accurate, and scalable CMTF method.
\method significantly decreases the running time by lock-free parallel SGD update and reusing intermediate data. \method boosts up prediction accuracy by exploiting the sparsity of data, and inter-relations between factors.
\method shows 2.1$\sim$4.1$\times$ less error compared to the previous methods and improves the running time by 11$\sim$43$\times$.
\method shows linear scalability for the number of data entries and parallel cores.
Moreover, we show the usefulness of \method for cluster analysis and recommendation by applying \method to a real-world data Yelp. Future works include extending the method to a distributed setting.

\bibliographystyle{ACM-Reference-Format}
\bibliography{BIB/other}


\end{document}